\begin{document}

\newtheorem{theorem}{Theorem}
\newtheorem{lemma}[theorem]{Lemma}
\newtheorem{claim}[theorem]{Claim}
\newtheorem{cor}[theorem]{Corollary}
\newtheorem{proposition}[theorem]{Proposition}
\newtheorem{definition}[theorem]{Definition}
\newtheorem{question}[theorem]{Question}
\newtheorem{remark}[theorem]{Remark}
\newcommand{\hh}{{{\mathrm h}}}

\numberwithin{equation}{section}
\numberwithin{theorem}{section}
\numberwithin{table}{section}

\def\sssum{\mathop{\sum\!\sum\!\sum}}
\def\ssum{\mathop{\sum\ldots \sum}}
\def\dsum{\mathop{\sum \sum}}
\def\iint{\mathop{\int\ldots \int}}

\renewcommand*{\backref}[1]{}
\renewcommand*{\backrefalt}[4]{%
    \ifcase #1 (Not cited.)%
    \or        (p.\,#2)%
    \else      (pp.\,#2)%
    \fi}
\def\lc{{\mathrm{lc}}}
\newcommand{\hcan}{{\operatorname{\wh h}}}

\def\squareforqed{\hbox{\rlap{$\sqcap$}$\sqcup$}}
\def\qed{\ifmmode\squareforqed\else{\unskip\nobreak\hfil
\penalty50\hskip1em\null\nobreak\hfil\squareforqed
\parfillskip=0pt\finalhyphendemerits=0\endgraf}\fi}

\newfont{\teneufm}{eufm10}
\newfont{\seveneufm}{eufm7}
\newfont{\fiveeufm}{eufm5}
%
%
\newfam\eufmfam
     \textfont\eufmfam=\teneufm
\scriptfont\eufmfam=\seveneufm
     \scriptscriptfont\eufmfam=\fiveeufm
%
%
\def\frak#1{{\fam\eufmfam\relax#1}}

\newcommand{\bflambda}{{\boldsymbol{\lambda}}}
\newcommand{\bfmu}{{\boldsymbol{\mu}}}
\newcommand{\bfxi}{{\boldsymbol{\xi}}}
\newcommand{\bfrho}{{\boldsymbol{\rho}}}

\def\fK{\mathfrak K}
\def\fT{\mathfrak{T}}
\def\fR{\mathfrak{R}}
\def\fQ{\mathfrak{Q}}

\def\fA{{\mathfrak A}}
\def\fB{{\mathfrak B}}
\def\fC{{\mathfrak C}}

\def\E{\mathsf {E}}

\def \balpha{\bm{\alpha}}
\def \bbeta{\bm{\beta}}
\def \bgamma{\bm{\gamma}}
\def \blambda{\bm{\lambda}}
\def \bchi{\bm{\chi}}
\def \bphi{\bm{\varphi}}
\def \bpsi{\bm{\psi}}

\def\eqref#1{(\ref{#1})}

\def\vec#1{\mathbf{#1}}


\def\cA{{\mathcal A}}
\def\cB{{\mathcal B}}
\def\cC{{\mathcal C}}
\def\cD{{\mathcal D}}
\def\cE{{\mathcal E}}
\def\cF{{\mathcal F}}
\def\cG{{\mathcal G}}
\def\cH{{\mathcal H}}
\def\cI{{\mathcal I}}
\def\cJ{{\mathcal J}}
\def\cK{{\mathcal K}}
\def\cL{{\mathcal L}}
\def\cM{{\mathcal M}}
\def\cN{{\mathcal N}}
\def\cO{{\mathcal O}}
\def\cP{{\mathcal P}}
\def\cQ{{\mathcal Q}}
\def\cR{{\mathcal R}}
\def\cS{{\mathcal S}}
\def\cT{{\mathcal T}}
\def\cU{{\mathcal U}}
\def\cV{{\mathcal V}}
\def\cW{{\mathcal W}}
\def\cX{{\mathcal X}}
\def\cY{{\mathcal Y}}
\def\cZ{{\mathcal Z}}
\newcommand{\rmod}[1]{\: \mbox{mod} \: #1}

\def\cg{{\mathcal g}}

\def\e{{\mathbf{\,e}}}
\def\ep{{\mathbf{\,e}}_p}
\def\eq{{\mathbf{\,e}}_q}

\def\em{{\mathbf{\,e}}_m}

\def\Tr{{\mathrm{Tr}}}
\def\Nm{{\mathrm{Nm}}}

\def\rE{{\mathrm{E}}}
\def\rT{{\mathrm{T}}}

 \def\SS{{\mathbf{S}}}

\def\lcm{{\mathrm{lcm}}}

\def\t{\tilde}
\def\ov{\overline}

\def\({\left(}
\def\){\right)}
\def\l|{\left|}
\def\r|{\right|}
\def\fl#1{\left\lfloor#1\right\rfloor}
\def\rf#1{\left\lceil#1\right\rceil}
\def\flq#1{\langle #1 \rangle_q}

\def\mand{\qquad \mbox{and} \qquad}

\newcommand{\commIg}[1]{\marginpar{%
\begin{color}{magenta}
\vskip-\baselineskip 
\raggedright\footnotesize
\itshape\hrule \smallskip Ig: #1\par\smallskip\hrule\end{color}}}

\newcommand{\commIl}[1]{\marginpar{%
\begin{color}{red}
\vskip-\baselineskip 
\raggedright\footnotesize
\itshape\hrule \smallskip Il: #1\par\smallskip\hrule\end{color}}}

\newcommand{\commSi}[1]{\marginpar{%
\begin{color}{blue}
\vskip-\baselineskip 
\raggedright\footnotesize
\itshape\hrule \smallskip Si: #1\par\smallskip\hrule\end{color}}}




\hyphenation{re-pub-lished}

\mathsurround=1pt

\def\bfdefault{b}
\overfullrule=5pt

\def \F{{\mathbb F}}
\def \K{{\mathbb K}}
\def \Z{{\mathbb Z}}
\def \Q{{\mathbb Q}}
\def \R{{\mathbb R}}
\def \C{{\\mathbb C}}
\def\Fp{\F_p}
\def \fp{\Fp^*}

\def\Smn{S_{k,\ell,q}(m,n)}

\def\Kmn{\cK_p(m,n)}
\def\psmn{\psi_p(m,n)}

\def\SM{\cS_{k,\ell,q}(\cM)}
\def\SMN{\cS_{k,\ell,q}(\cM,\cN)}
\def\SAMN{\cS_{k,\ell,q}(\cA;\cM,\cN)}
\def\SABMN{\cS_{k,\ell,q}(\cA,\cB;\cM,\cN)}

\def\SIJq{\cS_{k,\ell,q}(\cI,\cJ)}
\def\SAJq{\cS_{k,\ell,q}(\cA;\cJ)}
\def\SABJq{\cS_{k,\ell,q}(\cA, \cB;\cJ)}

\def\sM{\cS_{k,q}^*(\cM)}
\def\sMN{\cS_{k,q}^*(\cM,\cN)}
\def\sAMN{\cS_{k,q}^*(\cA;\cM,\cN)}
\def\sABMN{\cS_{k,q}^*(\cA,\cB;\cM,\cN)}

\def\sIJq{\cS_{k,q}^*(\cI,\cJ)}
\def\sAJq{\cS_{k,q}^*(\cA;\cJ)}
\def\sABJq{\cS_{k,q}^*(\cA, \cB;\cJ)}
\def\sABJp{\cS_{k,p}^*(\cA, \cB;\cJ)}

 \def \xbar{\overline x}

\title[Exponential Sums with Trinomials]{
Multiplicative Energy of Shifted Subgroups and 
Bounds On Exponential Sums with Trinomials in Finite Fields}

 \author[S.  Macourt] {Simon Macourt}
\address{Department of Pure Mathematics, University of New South Wales,
Sydney, NSW 2052, Australia}
\email{s.macourt@unsw.edu.au}

 \author[I. D. Shkredov]{Ilya D. Shkredov}
\address{Steklov Mathematical Institute of Russian Academy
of Sciences, ul. Gubkina 8, Moscow, Russia, 119991, and Institute for Information Transmission Problems  of Russian Academy
of Sciences, Bolshoy Karet\-ny Per. 19, Moscow, Russia, 127994,
and MIPT, Institutskii per. 9, Dolgoprudnii, Russia, 141701}
\email{ilya.shkredov@gmail.com}
 
  \author[I. E. Shparlinski] {Igor E. Shparlinski}
\address{Department of Pure Mathematics, University of New South Wales,
Sydney, NSW 2052, Australia}
\email{igor.shparlinski@unsw.edu.au}

\begin{abstract} We give a new bound on collinear triples in subgroups of prime finite 
fields and use it to give some new bounds on exponential sums with  trinomials.
 \end{abstract}
\keywords{exponential sum, sparse polynomial, trinomial}
\subjclass[2010]{11L07, 11T23}

\maketitle

\section{Introduction}

\subsection{Set up}

For a prime $p$, we use $\F_p$ to denote  the finite field of $p$ elements.

For a $t$-sparse polynomial  
$$
\Psi(X) = \sum_{i=1}^t a_i X^{k_i}
$$
with some   pairwise distinct positive integer exponents $k_1, \ldots, k_t$ and 
coefficients  $a_1, \ldots, a_t\in \F_p^*$,  and a multiplicative character $\chi$ of $\F_p^*$ we define  the   sums
$$
S_\chi(\Psi) = \sum_{x\in \F_p^*} \chi(x) \ep(\Psi(x)), 
$$
where $\ep(u) = \exp(2 \pi i u/p)$ and $\chi$ is an arbitrary 
multiplicative character of $\F_p^*$. 
Certainly, the most interesting and well-studied special case is when $\chi=\chi_0$ is a 
principal character. However most of our results extend to the general case 
without any loss of strength or complication of the argument, so this is how we present them.

The main challenge here is to estimate these sums better than by the Weil bound
$$
|S_\chi(\Psi)| \le \max\{k_1, \ldots, k_t\} p^{1/2}, 
$$
see~\cite[Appendix~5, Example~12]{Weil}, by taking advantage of sparsity 
and also of the arithmetic structure of the exponents $k_1, \ldots, k_t$. 

For monomials $\Psi(X) = aX^k$ (where we can always assume that $k\mid p-1$)
the first  bound of this type is due to Shparlinski~\cite{Shp1},  which has then been improved 
and extended in various directions
by Bourgain,  Glibichuk and  Konyagin~\cite{BGK}, Bourgain~\cite{Bourg1}, 
Heath-Brown and Konyagin~\cite{HBK}, Konyagin~\cite{Kon}, Shkredov~\cite{Shkr1}, 
Shteinikov~\cite{Sht}.

Akulinichev~\cite{Aku} gives several  bounds on binomials, see also~\cite{Yu}.  Cochrane, Coffelt and  Pinner, 
see~\cite{CoCoPi1,CoCoPi2,CoPi1, CoPi2, CoPi3, CoPi4} and references therein, have given a series of other bounds
on exponential sums with sparse polynomials, some of which we present below in Section~\ref{sec:old res}.

We also remark that exponential sums with sparse polynomials
and a composite denominator have  been studied in~\cite{Bourg2, Shp2}.

Here we use a slightly different approach to improve some of the previous results. 
Our approach is reliant on reducing bounds of exponential sums with sparse polynomials
to bounds of weighted multilinear exponential sums of the type considered in~\cite{PetShp}. 
However, instead of applying the results of~\cite{PetShp} directly, we first obtain a
more precise variant for triple weighted sums over multiplicative subgroups of  $\F_p^*$, 
which could be of independent interest, 
see Lemma~\ref{lem:Bound T3} below. 

This result rests  on an extension of the bound on the number of collinear triples in 
multiplicative subgroups from~\cite[Proposition~1]{Shkr2} to subgroups of any size, 
see Theorem~\ref{thm:T(G)}. In turn,  this  gives a new bound on 
the multiplicative energy of arbitrary subgroups, see Corollary~\ref{cor:EnergyShiftSubgr}, 
and has several other applications, see Section~\ref{sec:Appl}.

Although here we concentrate on the case of trinomials
 \begin{equation}
\label{eq:Trinom} 
\Psi(X) =aX^{k}+bX^\ell + cX^m, 
 \end{equation}
our method works, without any changes, for more general sums with polynomials 
of the shape 
$$
\Psi(X) = aX^k + F(X^\ell) + G(X^m)
$$
with arbitrary polynomials $F,G \in \F_p[X]$ (uniformly in the degrees of $F$ and $G$, which essentially 
means that  they can be any functions defined on $\F_p$).

One can certainly use our approach for sums with quadrinomials reducing it 
to quadrilinear sums and  using our Lemma~\ref{Bound Dx}
in an appropriate place of the argument of the proof of~\cite[Theorem~1.4]{PetShp}. Furthermore, using results of~\cite{Bourg2,BouGlib,Gar},
one can consider the case of arbitrary sparse polynomials. 

The notation $A \ll B$ is equivalent to $|A| \leq c |B|$ for some constant $c$,
which, throughout the paper may only depend on the number of monomials 
in the sparse  polynomials under considerations. 

\subsection{Previous  results}
\label{sec:old res}

We compare our results  for trinomials~\eqref{eq:Trinom} with the 
 estimates of
 Cochrane, Coffelt and  Pinner~\cite[Equation~(1.6)]{CoCoPi1}
\begin{equation} \label{eq:CCP bound1}
S_\chi(\Psi) \ll \(\frac{k\ell m}{ \max\{ k,\ell, m\}}\)^{1/4}  p^{7/8}
\end{equation}
which  is non-trivial for $\min\{k\ell,  k m, \ell m \}<p^{1/2}$,
and of Cochrane  and  Pinner~\cite[Theorem~1.1]{CoPi1}:
\begin{equation} \label{eq:CP bound}
S_\chi(\Psi) \ll  (k\ell m)^{1/9} p^{5/6}
\end{equation}
which  is non-trivial for $k\ell m<p^{3/2}$.

We also recall the bound of Cochrane, Coffelt and  Pinner~\cite[Corollary~1.1]{CoCoPi2}
\begin{equation} \label{eq:CCP bound2}
S_\chi(\Psi) \ll  D^{1/2} p^{7/8} + (k\ell m)^{1/4}p^{5/8}
\end{equation}
where $D = \gcd(k, \ell, m, p-1)$, which  is non-trivial for $k\ell m<p^{3/2}$
and $D < p^{1/4}$.

\subsection{New results}

The following quantity is one of our main objects of study.

\begin{definition}[Collinear triples]
For sets  $\cU_1, \cU_2 \subseteq\F^*_p$ and elements $\lambda_1,\lambda_2 \in  \F^*_p$ we define $\rT_{\lambda_1,\lambda_2} (\cU_1, \cU_2)$ to be the number of solutions to  
\begin{equation}
\label{eq:T_def}
\frac{u_1-\lambda_1 v_1}{u_1-\lambda_1 w_1} = \frac{u_2- \lambda_2 v_2}{u_2- \lambda_2 w_2}, \qquad u_i,v_i,w_i \in \cU_i,  \ i=1,2.
  \end{equation}
\end{definition}
We also set 
$$
 \rT(\cU) = \rT_{1,1} (\cU, \cU).
 $$

As the relation~\eqref{eq:Colin} shows,  the triples $(u_i,v_i,w_i)$, $ i=1,2$ 
satisfying~\eqref{eq:T_def} define their collinear points.
Recent results on the quantity $\rT(\cU)$ for an arbitrary set $\cU$ can be 
found in~\cite{MPR-NRS}, where, in particular, the bound 
$$
 \rT(\cU) = \frac{|\cU|^6}{p}  + O\(p^{1/2} |\cU|^{7/2}\)
$$
is given.  This bound has been generalised in~\cite{Mac} as 
\begin{equation}
\label{eq:T-Gen U}
 \rT_{\lambda_1,\lambda_2}(\cU_1, \cU_2) = \frac{|\cU_1|^3|\cU_2|^3}{p}  
 + O\(p^{1/2}|\cU_1|^{3/2}|\cU_2|^2+|\cU_1|^{3}|\cU_2|\), 
 \end{equation}
 provided that $|\cU_1| \ge |\cU_2|$. 

Note that in~\eqref{eq:T_def}, as well as in all similar expressions of this type, we 
consider only the values of the variables for which these expressions are defined 
(that is, $u_i \ne \lambda_i w_i$,  $i=1,2$ in~\eqref{eq:T_def}). 

We begin by providing a new result on the number of collinear triples in subgroups.
More generally, for  a multiplicative subgroup $\cG$ of $\F^*_p$  we define  
$\rT_{\lambda} (\cG) =  \rT_{1,\lambda} (\cG)$ which is our main object of study.

\begin{theorem} \label{thm:T(G)}
Let $\cG$   be a multiplicative subgroup of $\F^*_p$.
Then for any  $\lambda  \in  \F^*_p$, we have
$$
 \rT_{\lambda} (\cG) - \frac{|\cG|^6}{p}  \ll  \left\{
\begin{array}{ll}
p^{1/2} |\cG|^{7/2},& \text{if $ |\cG| \ge p^{2/3}$},\\
|\cG|^5 p^{-1/2} , & \text{if $p^{2/3} > |\cG| \ge p^{1/2}\log p$},\\
|\cG|^4 \log |\cH|, & \text{if $|\cG|< p^{1/2}\log p$}. 
\end{array}
\right.
$$
\end{theorem}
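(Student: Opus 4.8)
The range $|\cG|\ge p^{2/3}$ is immediate: applying~\eqref{eq:T-Gen U} with $\cU_1=\cU_2=\cG$, $\lambda_1=1$, $\lambda_2=\lambda$ gives $\rT_\lambda(\cG)-|\cG|^6/p\ll p^{1/2}|\cG|^{7/2}+|\cG|^4$, and $|\cG|^4\ll p^{1/2}|\cG|^{7/2}$ since $|\cG|\le p$. So the plan for the two smaller ranges is to reduce $\rT_\lambda(\cG)$ to the multiplicative energy of a shifted subgroup. Dividing numerator and denominator of each ratio in~\eqref{eq:T_def} by $u_1$, respectively $u_2$, and using that $v_i/u_i,w_i/u_i$ run over $\cG$ when $v_i,w_i$ do, I get
\begin{equation*}
\rT_\lambda(\cG)=|\cG|^2\sum_{t\in\F_p}r(t)r_\lambda(t),
\end{equation*}
where $r(t)=\#\{(a,b)\in\cG^2:\,1-a=t(1-b)\}$ and $r_\lambda(t)=\#\{(a,b)\in\cG^2:\,1-\lambda a=t(1-\lambda b)\}$ (the omitted cases $b=1$ contribute a negligible $O(|\cG|)$). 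The key observation is that $\sum_t r(t)^2=\E^{\times}(1-\cG)$ and $\sum_t r_\lambda(t)^2=\E^{\times}(1-\lambda\cG)$, where $\E^{\times}(\cS)=\#\{(s_1,s_2,s_3,s_4)\in\cS^4:\,s_1s_4=s_2s_3\}$; in particular the case $\lambda=1$ already yields the exact identity $\rT(\cG)=|\cG|^2\E^{\times}(1-\cG)$, which is the source of Corollary~\ref{cor:EnergyShiftSubgr}.

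To keep the main term I would not apply Cauchy--Schwarz directly to $\sum_t r(t)r_\lambda(t)$ but first separate the mean. With $\widehat r(x)=\sum_t r(t)\ep(xt)$, Plancherel gives $\sum_t r(t)r_\lambda(t)=p^{-1}\sum_x\widehat r(x)\overline{\widehat r_\lambda(x)}$, and since $\widehat r(0)=\widehat r_\lambda(0)=|\cG|^2+O(|\cG|)$ the frequency $x=0$ produces the main term $|\cG|^4/p$. For $x\ne0$ I apply Cauchy--Schwarz and use $\sum_{x\ne0}|\widehat r(x)|^2=p\,\widetilde E(1-\cG)+O(|\cG|^3)$, where $\widetilde E(\cS)=\E^{\times}(\cS)-|\cS|^4/p\ge0$. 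This gives
\begin{equation*}
\Bigl|\rT_\lambda(\cG)-\frac{|\cG|^6}{p}\Bigr|\ll|\cG|^2\bigl(\widetilde E(1-\cG)\,\widetilde E(1-\lambda\cG)\bigr)^{1/2}
\end{equation*}
up to terms that are negligible in both remaining ranges. As $\E^{\times}$ is dilation invariant, $\E^{\times}(1-\lambda\cG)=\E^{\times}(\lambda^{-1}-\cG)$, so both factors are values of $\widetilde E(\mu-\cG)$ with $\mu\in\F_p^*$, and the theorem follows once I show, uniformly in $\mu$, that $\widetilde E(\mu-\cG)\ll|\cG|^3p^{-1/2}$ for $p^{2/3}>|\cG|\ge p^{1/2}\log p$ and $\widetilde E(\mu-\cG)\ll|\cG|^2\log|\cG|$ for $|\cG|<p^{1/2}\log p$.

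The remaining task is thus a bound on the multiplicative energy of the shifted subgroup $\mu-\cG$. Geometrically $\E^{\times}(\mu-\cG)=\sum_s\rho(s)^2$ with $\rho(s)=\#\{b\in\cG:\,sb+\mu(1-s)\in\cG\}$, the number of incidences of the grid $\cG\times\cG$ with the line of slope $s$ through the fixed point $(\mu,\mu)$; thus $\widetilde E(\mu-\cG)$ counts, off the diagonal, pairs of grid points collinear with $(\mu,\mu)$. In the small range $|\cG|<p^{1/2}\log p$ the term $|\cG|^4/p$ is negligible and the bound $\ll|\cG|^2\log|\cG|$ is essentially~\cite[Proposition~1]{Shkr2}: a dyadic decomposition over the values of $\rho(s)$, combined with the fact that a single line carries few points of $\cG\times\cG$, produces exactly the logarithmic factor. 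For the (new) middle range I would argue analytically, through the multiplicative characters $\psi$ of $\F_p^*$: with $R(\psi)=\sum_{a\in\cG}\psi(\mu-a)$ one has the clean identity $\E^{\times}(\mu-\cG)=\tfrac{1}{p-1}\sum_\psi|R(\psi)|^4$, the principal character recovering the main term, so that $\widetilde E(\mu-\cG)=\tfrac{1}{p-1}\sum_{\psi\ne\psi_0}|R(\psi)|^4$ up to negligible terms.

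The main obstacle is precisely this fourth moment. The easily available inputs are the second moment $\sum_\psi|R(\psi)|^2=(p-1)|\cG|$ and Weil's bound $|R(\psi)|\ll p^{1/2}$ (valid because, after detecting membership in $\cG$ by characters, $R(\psi)$ becomes a complete character sum of two characters of linear forms); but $\max_\psi|R(\psi)|^2\cdot\sum_{\psi\ne\psi_0}|R(\psi)|^2$ only yields $\widetilde E(\mu-\cG)\ll p|\cG|$, which is insufficient. The target $\sum_{\psi\ne\psi_0}|R(\psi)|^4\ll p^{1/2}|\cG|^3$ sits strictly between the trivial lower bound $\gg p|\cG|^2$ and the Weil upper bound $\ll p^2|\cG|$, and reaching it requires genuine cancellation: one must show that only few characters $\psi$ make $|R(\psi)|$ large and bound their joint contribution, rather than estimating each term separately. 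Organising this fourth-moment estimate for the shifted subgroup --- equivalently, the bilinear cancellation in $\sum_{a\in\cG}\ep(-x\mu a)\sum_{g\in\cG}\ep(x(a-\mu)g)$ on the additive side --- is where the real work lies; the two easier ranges are then bookkeeping.
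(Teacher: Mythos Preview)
Your reduction is clean and correct: after using~\eqref{eq:T-Gen U} for $|\cG|\ge p^{2/3}$ and Lemma~\ref{lem:sigma} (that is,~\cite[Proposition~1]{Shkr2}) for $|\cG|<p^{1/2}$, you reduce the middle range to the estimate
\[
\widetilde E(\mu-\cG)=\rE^\times(\mu-\cG)-\frac{|\cG|^4}{p}\ll |\cG|^3 p^{-1/2},
\]
uniformly in $\mu\in\F_p^*$. But you do not prove this bound; you only explain why the Weil bound $|R(\psi)|\ll p^{1/2}$ together with the second moment $\sum_\psi |R(\psi)|^2=(p-1)|\cG|$ is too weak, and leave the required fourth-moment cancellation as ``where the real work lies''. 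This is the genuine gap: the inequality you need is precisely Corollary~\ref{cor:EnergyShiftSubgr}, which in the paper is \emph{deduced from} Theorem~\ref{thm:T(G)} via~\eqref{eq:T and E}, not proved independently. So your argument, as it stands, is circular in the new range.

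The paper closes the loop by a completely different, incidence-geometric route that bypasses character sums. Using Corollary~\ref{cor:T Asymp} one writes
\[
\rT_\lambda(\cG)-\frac{|\cG|^6}{p}
=\sum_{(a,b)}\iota_\cG(\ell_{a,b})\Bigl(\iota_\cG(\ell_{\lambda a,\mu b})-\frac{|\cG|^2}{p}\Bigr)^2+O(|\cG|^4),
\]
and splits at a threshold $\iota_\cG(\ell_{a,b})>\Delta$ with $\Delta\asymp |\cG|^3 p^{-3/2}$. The contribution of lines with $\iota_\cG\le\Delta$ is handled by Corollary~\ref{cor:BKT-Deviate} and gives $\Delta|\cG|^2 p\ll |\cG|^5 p^{-1/2}$. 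For rich lines one invokes the Heath--Brown--Konyagin/Mit$'$kin bound (Lemma~\ref{lem:Mit}): the number of lines meeting $\cG\times\cG$ in $\ge\tau$ points is $\ll|\cG|^4\tau^{-3}$, once one checks (Lemma~\ref{lem:Q_tau}, via the trivial incidence bound~\eqref{eq:Incid}) that the hypothesis of Lemma~\ref{lem:Mit} holds for this choice of $\Delta$. A dyadic sum over $\tau$ then gives $W^*\ll|\cG|^4\log|\cG|$. It is this point--line input, not any direct control of $\sum_{\psi\ne\psi_0}|R(\psi)|^4$, that produces the saving in the middle range; the energy bound you were aiming for falls out afterwards as a corollary.
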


\begin{remark}
\label{rem:new vs old}
Theorem~\ref{thm:T(G)} is new only for subgroups of intermediate size $p^{2/3} > |\cG|>p^{1/2}$, otherwise it is 
contained in~\cite[Proposition~1]{Shkr2},  see also Lemma~\ref{lem:sigma}  below, 
or in the bound~\eqref{eq:T-Gen U} from~\cite{Mac}.
\end{remark}

\begin{remark}
\label{rem:G and H}
The method of proof of Theorem~\ref{thm:T(G)} also works without any changes 
for $\rT_{\lambda, \mu} (\cG, \cH) $ with two multiplicative subgroups, similarly to Lemma~\ref{lem:sigma}.
However, for subgroups of significantly different sizes the optimisation part becomes rather
tedious. 
\end{remark}



We use  Theorem~\ref{thm:T(G)} to obtain the following new bound on trinomial sums.

\begin{theorem}
\label{thm:Bound3}   
Let $\Psi(X)$ be a trinomial of the form~\eqref{eq:Trinom} 
with $a,b,c  \in \F_p^*$.  
Define
$$d= \gcd(k,p-1), \qquad e = \gcd(\ell,p-1), \qquad  f =  \gcd(m,p-1)
$$
and
$$
g =\frac{d}{\gcd(d,f)},\qquad h =\frac{e}{\gcd(e,f)}.
$$
Suppose 
$f \ge g\ge h$, then 
$$
S_\chi(\Psi) \ll  \left\{
\begin{array}{ll}
p^{7/8}f^{1/8}, & \text{if $h\ge \(p  \log p\)^{1/2}$},\\
p^{15/16}(f/h)^{1/8} \(\log p\)^{1/16}, & \text{if $ g \ge \(p  \log p\)^{1/2}>h$},\\
p(f/gh)^{1/8}\(\log p\)^{1/8}, & \text{if $g< \(p  \log p\)^{1/2}$}. 
\end{array}
\right.
$$
\end{theorem}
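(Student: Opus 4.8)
The plan is to follow the reduction strategy announced in the introduction: first express $S_\chi(\Psi)$ through a weighted triple sum over multiplicative subgroups, then invoke Lemma~\ref{lem:Bound T3} (itself powered by Theorem~\ref{thm:T(G)}), and finally optimise. A useful orienting observation is that every branch of the claimed bound carries the exponent $1/8$; this suggests that one should estimate $|S_\chi(\Psi)|^8$ and take eighth roots at the very end, the eighth power arising from the successive Cauchy--Schwarz steps in the reduction together with those inside the proof of Lemma~\ref{lem:Bound T3}.

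For the reduction I would exploit the multiplicative shift invariance of $S_\chi(\Psi)$. Replacing $x$ by $xy$ with $y$ in the group $\mu_d$ of $d$-th roots of unity fixes the monomial $aX^k$ (as $y^k=1$) while twisting the remaining two monomials by $y^\ell$ and $y^m$; as $y$ runs through $\mu_d$ the power $y^m$ sweeps out a subgroup of order exactly $g=d/\gcd(d,f)$, since $\gcd(m,d)=\gcd(d,f)$, and an analogous shift by $\mu_e$ produces the subgroup of order $h=e/\gcd(e,f)$ attached to the $X^m$ term. Combining this with completion in the $X^m$ variable, i.e.\ fibering the $x$-sum over the cosets of $\mu_f$ (on which $x^m$ is constant, which is efficient precisely because $f$ is the largest of the three gcds), reduces the inner sums to binomial subgroup sums. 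Averaging over the two shift subgroups, interleaved with Cauchy--Schwarz to strip off the coefficients $a,b,c$, the character $\chi$, and the free phase carried by $cX^m$, turns $|S_\chi(\Psi)|^8$ into a weighted triple sum over multiplicative subgroups whose orders are governed by $f$, $g$ and $h$ — exactly the object estimated by Lemma~\ref{lem:Bound T3}.

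Feeding the output of Lemma~\ref{lem:Bound T3}, and through it Theorem~\ref{thm:T(G)}, then delivers the three cases. The three regimes of Theorem~\ref{thm:T(G)}, keyed to the size of the relevant subgroup against $p^{2/3}$ and $p^{1/2}\log p$, translate after the reduction into the three stated regimes keyed to the sizes of $h$ and $g$ against $(p\log p)^{1/2}$. When even the smaller subgroup (order $h$, and hence also $g\ge h$) is large, only the crude estimate $p^{1/2}|\cG|^{7/2}$ is available and one recovers $p^{7/8}f^{1/8}$ with no gain from $g,h$; in the intermediate range the estimate $|\cG|^5p^{-1/2}$ applies and yields the partial saving $(f/h)^{1/8}$; and in the smallest range $|\cG|^4\log|\cH|$ gives the full saving $(f/gh)^{1/8}$ at the cost of the $(\log p)^{1/8}$ factor.

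The main obstacle is the reduction rather than the final estimation. One must carefully track the interaction of the exponents $k,\ell,m$ through the gcds $d,e,f$ and verify that the shift subgroups really have the orders $g$ and $h$; here the hypothesis $f\ge g\ge h$ governs which monomial is kept fixed at each stage. One must also confirm that the weights produced are admissible inputs to Lemma~\ref{lem:Bound T3}. A secondary point is that the whole argument should run uniformly for an arbitrary multiplicative character $\chi$, but since the shifts only introduce unimodular factors $\chi(y)^{\pm1}$ that are absorbed by the absolute values, this costs nothing, in agreement with the remark in the introduction. The genuinely delicate bookkeeping is confined to the final optimisation, which must be arranged so that the case thresholds emerge cleanly as $(p\log p)^{1/2}$.
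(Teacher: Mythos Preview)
Your high-level plan --- shift-average by the subgroups $\mu_d$, $\mu_e$ and then invoke Lemma~\ref{lem:Bound T3} --- is exactly the paper's route, and your identification of the image subgroups of orders $g$ and $h$ is correct. But the reduction is considerably simpler than you describe, and your account of where the exponent $1/8$ comes from is off.

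No Cauchy--Schwarz is used in the reduction, and one does not work with $|S_\chi(\Psi)|^8$. After averaging over $y\in\cG_d$ and $z\in\cG_e$ one has, without any loss,
\[
S_\chi(\Psi)=\frac{1}{de}\sum_{x\in\F_p^*}\sum_{y\in\cG_d}\sum_{z\in\cG_e}\rho_{x,y}\,\sigma_{x,z}\,\ep\bigl(c\,x^m y^m z^m\bigr),
\]
with $\rho_{x,y}=\chi(x)\chi(y)\ep(bx^\ell y^\ell)$ and $\sigma_{x,z}=\chi(z)\ep(ax^k z^k)$, both of modulus $1$ (and $\tau\equiv 1$). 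This \emph{is already} a weighted trilinear sum of the type in Lemma~\ref{lem:Bound T3}: the effective variables are $x^m$, $y^m$, $z^m$, ranging over subgroups of sizes $(p-1)/f$, $g$, $h$ with multiplicities $f$, $\gcd(d,f)$, $\gcd(e,f)$. There is nothing to ``strip off'' --- the coefficients $a,b$ and the character $\chi$ sit inside the unimodular weights, and the phase $\ep(cx^m y^m z^m)$ is precisely the trilinear kernel. So your proposed interleaving of Cauchy--Schwarz steps and your ``completion/fibering'' manoeuvre are unnecessary; if carried out they would only lose information.

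Consequently the exponent $1/8$ arises \emph{entirely} inside Lemma~\ref{lem:Bound T3} (via the argument of~\cite[Theorem~1.3]{PetShp}), not from the reduction. Also note that the large subgroup in the trilinear sum has order $F=(p-1)/f$, not $f$; this is why the final bounds carry positive powers of $f$ rather than negative ones. With $F=(p-1)/f$, $G=g$, $H=h$ plugged into Lemma~\ref{lem:Bound T3} and the multiplicities $f\gcd(d,f)\gcd(e,f)/(de)=f/(gh)$ accounted for, the three stated cases drop out directly --- no separate optimisation is needed.
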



Note that the assumption $f\ge g\ge h$ of Theorem~\ref{thm:Bound3}  does not present any  
additional restriction on the class of polynomials to which it applies as
the roles of $k$, $\ell$ and $m$ 
are fully symmetric: if $h > g$,  say, one can simply interchange $g$ and $h$ in the bound. 

We observe that the  bound of Theorem~\ref{thm:Bound3}  does not directly depend on the size of the exponents  $k$, $\ell$ and $m$  but rather  on various  greatest common divisors. In particular, it is strongest for large
 $d$ and $e$ and small  greatest common divisors $f$, $\gcd(d,f)$ and $\gcd(e,f)$. 
Furthermore,  it may remain nontrivial even for polynomials of very large degrees,  while 
 the bounds~\eqref{eq:CCP bound1},  \eqref{eq:CP bound} and~\eqref{eq:CCP bound2} 
 all become trivial for trinomials of large degree.
 Thus it is easy to give various families   of parameters where  Theorem~\ref{thm:Bound3}
 improves the bounds~\eqref{eq:CCP bound1}, \eqref{eq:CP bound} and~\eqref{eq:CCP bound2}
 simultaneously. 
 For example, we assume that $f > d > e$ are relatively prime positive integers with, say,
 $$
 p^\delta < f < (de)^{1-\delta} 
 $$
  for some fixed real $\delta > 0$. 
 Then $g = d$ and $h = e$ and we also have $d< p^{1/2}$, $e < p^{1/3}$. Hence, the bound of 
 Theorem~\ref{thm:Bound3} becomes 
 $$
 S_\chi(\Psi) \ll p(f/gh)^{1/8+o(1)}  = p(f/de)^{1/8+o(1)}, 
 $$ 
 which always gives a power saving against the trivial bound. On the other hand, 
 choosing $k$, $\ell$ and $m$ as large multiples of $d$, $e$ and $f$, respectively, 
 say, with $k,m,\ell \ge p^{1/2+\delta}$, 
 we see that all bounds from Section~\ref{sec:old res}, and of course the Weil bound, 
 are trivial. 
  
 We also give further applications of Theorem~\ref{thm:T(G)} to some additive problems
 with multiplicative subgroups of $\F_p^*$ in Section~\ref{sec:Appl}.
 In particular, in  Corollary~\ref{cor:Rom} we consider a modular version of the {\it Romanoff
theorem\/} and show that  for  almost all primes $p$, any residue  class 
modulo $p$ can be represented as a sum of  a prime $\ell   < p$ and three powers of 
any fixed integer $g \ge 2$. 
We recall that the classical result of Romanoff~\cite{Rom} asserts  that for any fixed integer $g \ge 2$ a positive proportion of integers can be written 
in the form $\ell+g^k$, with some prime $\ell$ and non-negative integer $k$. By a result of Crocker~\cite{Croc}, there are infinitely 
many positive integers not of the form  $\ell+2^k+2^m$. The case of three powers of $2$ or any other base $g>2$
is widely open.

\section{Collinear Triples}
\label{sec:col trip}

\subsection{Prelimaries}

We require some previous results.  
We note that we use  Lemma~\ref{lem:Mit} only for $\cG = \cH$, however we present it and also some other results in full generality as we  believe they may find several other applications and this deserves to be known better. 

The first one is a result of Mit'kin~\cite[Theorem~2]{Mit}
extending  that of 
Heath-Brown and 
Konyagin~\cite[Lemma~5]{HBK},   see also~\cite{Kon, 
	ShkVyu} 
for further generalisations.
 
\begin{lemma}
\label{lem:Mit}
Let $\cG$ and $\cH$ be  subgroups of $\F_p^*$ and let 
 $\cM_\cG$ and $\cM_\cH $   
be two complete sets  of distinct coset representatives 
of $\cG$ and $\cH$  in $\F_p^*$.
For an arbitrary set $\varTheta \subseteq\cM_\cG \times \cM_\cH$ such that 
$$
|\varTheta| \le \min\left\{ |\cG||\cH|, \frac{p^3}{|\cG|^2|\cH|^2}\right\}
$$ 
we have
$$
\sum_{(u,v) \in \varTheta}\left |\{(x,y) \in \cG\times \cH~:~ux+vy=1\}\right| \ll (|\cG||\cH||\varTheta|^2)^{1/3}.
$$
\end{lemma}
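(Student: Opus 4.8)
The plan is to treat the quantity as an incidence count and, by a moment inequality, reduce it to an estimate accessible to Stepanov's method. Write $I(u,v)=|\{(x,y)\in\cG\times\cH:ux+vy=1\}|$, so that the sum in the statement is $N=\sum_{(u,v)\in\varTheta}I(u,v)$. The starting point is the substitution $a=ux$, $b=vy$: a solution then satisfies $a+b=1$, and since $u\in\cM_\cG$ is the unique coset representative of $a$ and $v\in\cM_\cH$ the unique representative of $b$, the whole quadruple $(u,v,x,y)$ is recovered from $a$ alone. Hence $N$ is the number of $a\in\F_p^*\setminus\{1\}$ for which the pair formed by the $\cG$-coset of $a$ and the $\cH$-coset of $1-a$ lies in $\varTheta$. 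In particular, the single-pair case $|\varTheta|=1$ is a coset version of the Garc\'{\i}a--Voloch bound $|\{x,y\in\cG:x+y=1\}|\ll|\cG|^{2/3}$, proved by Stepanov's method; the real task is to extract the extra averaging factor $|\varTheta|^{2/3}$.

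Next I would apply H\"older's inequality in the form $N^3\le|\varTheta|^2\sum_{(u,v)\in\varTheta}I(u,v)^3$, so that it suffices to prove the third-moment bound $\sum_{(u,v)\in\varTheta}I(u,v)^3\ll|\cG||\cH|$. Expanding the cube and repeating the substitution $a_i=ux_i$, $b_i=vy_i$ for the three solutions, this third moment is a count of configurations with $a_i+b_i=1$, $a_i/a_1\in\cG$ and $b_i/b_1\in\cH$ -- a collinear-triple-type quantity in the spirit of $\rT$. Over the full coset grid its total is $\asymp|\cG|^2|\cH|^2/p$, so the average of $I(u,v)^3$ is $\asymp(|\cG||\cH|/p)^3$; multiplying by $|\varTheta|$ shows that the heuristic value of the third moment is $\le|\cG||\cH|$ precisely when $|\varTheta|\le p^3/(|\cG|^2|\cH|^2)$. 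This is exactly the second hypothesis, while the first hypothesis $|\varTheta|\le|\cG||\cH|$ places the parameters in the range where the method is applicable; thus both conditions are forced by the arithmetic of the main term.

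The crux is to make this heuristic rigorous, and a pointwise estimate on $I(u,v)$ does not suffice: inserting $I(u,v)\ll(|\cG||\cH|)^{1/3}$ only reproduces the weaker bound $N\ll|\varTheta|(|\cG||\cH|)^{1/3}$, because the mass of $\sum I(u,v)^3$ concentrates on the few pairs carrying a large $I(u,v)$. Instead I would bound the third moment by Stepanov's method applied to the system as a whole, constructing an auxiliary polynomial in the subgroup variables that vanishes to prescribed order at every solution while not vanishing identically; the relations $x^{|\cG|}$, $y^{|\cH|}$ attached to the cosets are used to keep its degree low, and $|\varTheta|$ enters the degree budget directly. When $|\cG||\cH|\le p$ one may work over the whole grid, whereas when $|\cG||\cH|>p$ the smallness of $\varTheta$ must be used to limit how many large-$I$ pairs it can contain.

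The main obstacle is precisely this Stepanov step: choosing the auxiliary polynomial and verifying simultaneously the required multiplicity at the solutions and its non-vanishing, and then carrying out the degree bookkeeping so that the saving stays below the main term $|\varTheta|\,|\cG||\cH|/p$ uniformly over all admissible $\varTheta$. Once this uniform third-moment estimate is in hand, the change of variables, the H\"older step, and the optimisation of exponents are routine.
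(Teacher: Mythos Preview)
The paper does not give a proof of this lemma at all: it is quoted verbatim as a known result, attributed to Mit'kin~\cite[Theorem~2]{Mit} (extending Heath-Brown and Konyagin~\cite[Lemma~5]{HBK}), with further references to~\cite{Kon,ShkVyu}. So there is nothing to compare your argument against within the paper itself.

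That said, your outline is faithful to how the cited literature actually proceeds. The substitution $a=ux$, $b=vy$ reducing the count to a question about cosets of $a$ and $1-a$, the H\"older step $N^{3}\le|\varTheta|^{2}\sum I(u,v)^{3}$, and the reduction to a third-moment bound attacked by Stepanov's auxiliary-polynomial method are exactly the ingredients in Heath-Brown--Konyagin (for $\cG=\cH$) and in Mit'kin's extension to two subgroups. Your heuristic explanation of why the two hypotheses on $|\varTheta|$ arise is also correct.

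Where your write-up falls short of a proof is precisely where you say it does: the Stepanov construction is not merely a technical obstacle but the entire content of the lemma, and you have not carried it out. Building the auxiliary polynomial, verifying non-vanishing, and tracking the degree so that the bound $\sum_{(u,v)\in\varTheta} I(u,v)^{3}\ll |\cG||\cH|$ holds uniformly over admissible $\varTheta$ is a delicate computation occupying several pages in~\cite{HBK,Mit}; without it you have a correct plan but not a proof. If you intend this as a sketch pointing to the literature, that is fine --- but then the honest thing, as the paper does, is simply to cite Mit'kin.
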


Note that there is a natural bijection between $\cM_\cG$, $\cM_\cH$ and some subsets  of the factor groups $\F_p^*/\cG$ and $\F_p^*/\cH$. 
So, one can think of $\Theta$ as a subset of $\F_p^*/\cG \times \F_p^*/\cH$.

Clearly, the trivial bound on the sum of Lemma~\ref{lem:Mit} is
$$
\sum_{(u,v) \in \varTheta}\left |\{(x,y) \in \cG\times \cH~:~ux+vy=1\}\right| \ll 
\min\{ |\cG|, |\cH|\}|\varTheta|.
$$
Hence if, for example,  $\cG = \cH$, then  Lemma~\ref{lem:Mit} always significantly improves this bound.

Given a line 
$$
\ell_{a,b}=\{(x,y) \in \F_p^2~:~ y = ax+b\}
$$
for some
pair $(a,b) \in \F_p^2$ and  sets  $\cA, \cB\subseteq \F_p$,  we 
denote  
$$
 \iota_{\cA,\cB}\(\ell_{a,b}\) = \left |\ell_{a,b} \cap \(\cA\times \cB\)\right|.
$$ 

The following elementary identities are well-known and no doubt have appeared, 
implicitly and explicitly, in a number of works.  

\begin{lemma}
\label{lem:Mom12}
Let $\cA, \cB\subseteq \F_p$ and $\lambda,\mu \in \F_p^*$. Then
$$
\sum_{(a,b) \in \F_p^2}    \iota_{\cA,\cB}\(\ell_{a,b}\) = \sum_{(a,b) \in \F_p^2}    \iota_{\cA,\cB}\(\ell_{\lambda a,\mu b}\) =   p |\cA||\cB|
$$
and 
$$
\sum_{(a,b) \in \F_p^2}     \iota_{\cA,\cB}\(\ell_{a,b}\) \iota_{\cA,\cB}\(\ell_{\lambda a,\mu b}\)  = |\cA|^2|\cB|^2 - |\cA| |\cB|^2  + p|\cA||\cB|.
$$
\end{lemma}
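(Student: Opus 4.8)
The plan is to prove both identities by the standard device of expanding each incidence count $\iota_{\cA,\cB}(\ell_{a,b})$ as a sum of indicator functions over the points it counts and then interchanging the order of summation, so that the outer sum runs over points of $\cA\times\cB$ and the inner sum runs over the parameters $(a,b)$.

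For the first identity I would write $\iota_{\cA,\cB}(\ell_{a,b}) = \sum_{x\in\cA,\,y\in\cB}[\,y=ax+b\,]$ and sum over $(a,b)\in\F_p^2$. After interchanging the two summations, for each fixed $(x,y)\in\cA\times\cB$ the constraint $y=ax+b$ cuts out a line in the $(a,b)$-plane and so has exactly $p$ solutions $(a,b)$; summing over the $|\cA||\cB|$ choices of $(x,y)$ gives $p|\cA||\cB|$. The second form, with $\ell_{\lambda a,\mu b}$, reduces to the first by the substitution $a\mapsto \lambda^{-1}a,\ b\mapsto\mu^{-1}b$, which is a bijection of $\F_p^2$ because $\lambda,\mu\in\F_p^*$; it leaves the total unchanged and again yields $p|\cA||\cB|$.

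For the second identity I would expand both factors, so that the sum counts quadruples $(x_1,x_2,y_1,y_2)\in\cA^2\times\cB^2$ weighted by the number $N(x_1,x_2,y_1,y_2)$ of pairs $(a,b)$ satisfying simultaneously $y_1=ax_1+b$ and $y_2=\lambda a x_2+\mu b$. For fixed points this is a $2\times2$ linear system in $(a,b)$ whose determinant is $\mu x_1-\lambda x_2$. When the determinant is nonzero there is a unique solution, so $N=1$; when it vanishes the system either is inconsistent, giving $N=0$, or defines a full line of solutions, giving $N=p$, the consistency condition being $y_2=\mu y_1$. Thus $N$ equals $1$ off the degenerate locus $\mu x_1=\lambda x_2$ and jumps to $p$ precisely on those quadruples where both $\mu x_1=\lambda x_2$ and $y_2=\mu y_1$ hold. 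An equivalent and perhaps cleaner route is to insert additive characters, replacing each indicator $[\,y=ax+b\,]$ by $\frac1p\sum_{s}\ep(s(y-ax-b))$; summing over $a$ and $b$ then forces the two linear relations $sx_1+t\lambda x_2=0$ and $s+t\mu=0$ among the dual variables $s,t$, and orthogonality collapses the expression to the same stratified count, making the provenance of each term transparent.

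Collecting terms, the generic stratum contributes the full count $|\cA|^2|\cB|^2$ of quadruples with weight $1$, and the degenerate stratum supplies the lower-order corrections. The main obstacle — indeed the only place where genuine care is needed — is the bookkeeping of this degenerate stratum: one must verify that the pairs $(x_1,x_2)\in\cA^2$ with $\mu x_1=\lambda x_2$ number $|\cA|$ and that the consistent pairs $(y_1,y_2)\in\cB^2$ with $y_2=\mu y_1$ number $|\cB|$. Granting these two counts, removing the degenerate locus from the generic sum contributes $-|\cA||\cB|^2$ while the weight-$p$ consistent solutions contribute $+p|\cA||\cB|$, which together produce exactly the stated right-hand side $|\cA|^2|\cB|^2-|\cA||\cB|^2+p|\cA||\cB|$. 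I would therefore organize the final step around identifying the determinant-zero locus as a "diagonal" term and pinning down its cardinality, since that is where the precise shape of the correction is decided.
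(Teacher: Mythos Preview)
Your approach is exactly the paper's: expand each $\iota_{\cA,\cB}(\ell_{a,b})$ as a sum over points, interchange summations, and for the second identity stratify the resulting $2\times 2$ linear system in $(a,b)$ according to whether its determinant $\mu x_1-\lambda x_2$ vanishes. The paper argues identically, with the same three strata (nondegenerate; degenerate and inconsistent; degenerate and consistent).

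The step you correctly single out as ``the only place where genuine care is needed'' is, however, where the argument actually fails. The pairs $(x_1,x_2)\in\cA^2$ with $\mu x_1=\lambda x_2$ are parametrised by $x_2\in\cA$ with $\lambda\mu^{-1}x_2\in\cA$, so their number is $|\cA\cap\lambda\mu^{-1}\cA|$, not $|\cA|$; likewise the consistent pairs $(y_1,y_2)\in\cB^2$ with $y_2=\mu y_1$ number $|\cB\cap\mu^{-1}\cB|$, not $|\cB|$. Thus the second identity is false for general $\cA,\cB$ and $\lambda,\mu$: for instance with $p=5$, $\cA=\cB=\{0,1\}$, $\lambda=2$, $\mu=1$ the left-hand side equals $22$ while the right-hand side equals $28$. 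The paper's own proof makes the very same unjustified count. The identity is correct when $\lambda=\mu=1$ (the only case used in Corollary~\ref{cor:BKT-Deviate}), and in Corollary~\ref{cor:T Asymp} the discrepancy, once multiplied by $|\cA||\cB|/p$, is of size $O(|\cA|^2|\cB|^2)$ and so disappears into the existing error term; but as a stand-alone lemma the stated equality does not hold without an extra hypothesis such as $\lambda\mu^{-1}\cA=\cA$ and $\mu\cB=\cB$.
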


\begin{proof}  The first relation is   obvious 
as for every $(x,y,a) \in \cA\times \cB \times \F_p$ there is a unique $b = y -ax$ counted in that sum.

For the second sum, we write  
\begin{align*}
\sum_{(a,b) \in \F_p^2}  &    \iota_{\cA,\cB}\(\ell_{a,b}\)  \iota_{\cA,\cB}\(\ell_{\lambda a,\mu b}\) \\
& = 
\sum_{(u,v,x,y) \in \cA\times \cB \times \cA \times \cB}
\left|\{(a,b) \in \F_p^2~:~ v =   au + b, \ y = \lambda ax+\mu b\}\right|.
\end{align*} 
We now note that the $|\cA||\cB|$ quadruples $(u,v,x,y)\in \cA\times \cB \times \cA \times \cB$ with 
$$
(u,v) = (\lambda\mu^{-1} x, \mu^{-1} y)
$$
define exactly $p$ pairs $(a,b) = (a, v-au) \in  \F_p^2$ as above. 
Furthermore, the $|\cA| |\cB|\(|\cB|-1\)$ quadruples $(u,v,x,y) \in \cA\times \cB \times \cA \times \cB$ with 
$u =\lambda\mu^{-1} x$ but $v \ne \mu^{-1} y$ do not define any pairs $(a,b)$ as above. 
The remaining 
$$
|\cA|^2|\cB|^2 -|\cA| |\cB|\(|\cB|-1\) -|\cA||\cB| =|\cA|^2|\cB|^2 - |\cA| |\cB|^2 
$$
pairs (including the one with  $u \ne \lambda\mu^{-1} x$ but $v = \mu^{-1} y$) 
define  one pair $(a,b) \in \F_p^2$ as above each, which concludes the proof. 
\end{proof} 

Using Lemma~\ref{lem:Mom12} with $\lambda = \mu =1$,  
we now immediately derive the following result:

\begin{cor}
\label{cor:BKT-Deviate}
Let $\cA\subseteq \F_p$. Then
$$
\sum_{(a,b) \in \F_p^2} \(  \iota_{\cA,\cB}\(\ell_{a,b}\)- \frac{|\cA||\cB|}{p} \right)^2 \le p|\cA| |\cB| \,.
$$
\end{cor}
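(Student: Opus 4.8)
The plan is to derive Corollary~\ref{cor:BKT-Deviate} directly from the two identities of Lemma~\ref{lem:Mom12}, specialised to $\lambda=\mu=1$, by expanding the square and substituting. First I would write
\begin{align*}
\sum_{(a,b)\in\F_p^2}\(\iota_{\cA,\cB}(\ell_{a,b})-\frac{|\cA||\cB|}{p}\)^2
&=\sum_{(a,b)\in\F_p^2}\iota_{\cA,\cB}(\ell_{a,b})^2
-\frac{2|\cA||\cB|}{p}\sum_{(a,b)\in\F_p^2}\iota_{\cA,\cB}(\ell_{a,b})\\
&\qquad+\frac{|\cA|^2|\cB|^2}{p^2}\sum_{(a,b)\in\F_p^2}1.
\end{align*}
The cross term uses the first identity of Lemma~\ref{lem:Mom12}, giving $\sum_{(a,b)}\iota_{\cA,\cB}(\ell_{a,b})=p|\cA||\cB|$, while the last term is just $p^2$ times the constant, so it collapses to $|\cA|^2|\cB|^2/p\cdot p^2/p^2$; I would keep careful track of these factors of $p$.

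The key step is to recognise that the quadratic term $\sum_{(a,b)}\iota_{\cA,\cB}(\ell_{a,b})^2$ is exactly the second sum in Lemma~\ref{lem:Mom12} with $\lambda=\mu=1$, since then $\ell_{\lambda a,\mu b}=\ell_{a,b}$ and the product becomes a square. That identity gives $\sum_{(a,b)}\iota_{\cA,\cB}(\ell_{a,b})^2=|\cA|^2|\cB|^2-|\cA||\cB|^2+p|\cA||\cB|$. Substituting all three evaluations, the expression becomes
$$
\(|\cA|^2|\cB|^2-|\cA||\cB|^2+p|\cA||\cB|\)-2|\cA|^2|\cB|^2+|\cA|^2|\cB|^2
=p|\cA||\cB|-|\cA||\cB|^2,
$$
after the two copies of $|\cA|^2|\cB|^2$ cancel against the doubled cross term. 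Since $|\cA||\cB|^2\ge0$, this quantity is at most $p|\cA||\cB|$, which is precisely the claimed bound.

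I do not anticipate any genuine obstacle here: the result is a one-line algebraic consequence of Lemma~\ref{lem:Mom12} once the correct specialisation $\lambda=\mu=1$ is made. The only point requiring minor care is the bookkeeping of the powers of $p$ in the constant term and verifying that the positive $|\cA||\cB|^2$ term is being \emph{subtracted}, so that dropping it preserves the upper bound rather than reversing it. The inequality is in fact an equality up to the nonnegative discarded term, so the statement is sharp whenever $|\cB|$ is small relative to $p$.
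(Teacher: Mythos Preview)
Your proof is correct and follows exactly the approach the paper intends: expand the square and substitute the two identities of Lemma~\ref{lem:Mom12} with $\lambda=\mu=1$, obtaining $p|\cA||\cB|-|\cA||\cB|^2\le p|\cA||\cB|$. The only blemish is the garbled aside ``$|\cA|^2|\cB|^2/p\cdot p^2/p^2$'' for the constant term, but your final substitution uses the correct value $|\cA|^2|\cB|^2$, so the argument stands.
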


We now link the number of collinear triples $  \rT_{\lambda,\mu} (\cA, \cB) $ with the 
quantities $  \iota_{\cA,\cB}\(\ell_{a,b}\)$.

\begin{lemma}
\label{lem: T-Mom3}
Let $\cA, \cB\subseteq \F_p$ and $\lambda,\mu \in \F_p^*$. Then
$$
\rT_{\lambda, \mu} (\cA, \cB) =
\sum_{(a,b) \in \F_p^2}     \iota_{\cA,\cB}\(\ell_{a,b}\)   \iota_{\cA,\cB}\(\ell_{\lambda a,\mu  b}\)^2+ O(|\cA|^2 |\cB|^2)\,. 
$$
\end{lemma}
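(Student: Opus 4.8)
The plan is to open up the right-hand side as a weighted count of point configurations and then to read off, for each configuration, how many lines realise it. Unfolding the definition of $\iota_{\cA,\cB}$, the sum $\sum_{(a,b)\in\F_p^2}\iota_{\cA,\cB}(\ell_{a,b})\iota_{\cA,\cB}(\ell_{\lambda a,\mu b})^2$ equals the number of tuples $(a,b,x_0,y_0,x_1,y_1,x_2,y_2)$ with $x_0,x_1,x_2\in\cA$, $y_0,y_1,y_2\in\cB$ and
$$
y_0=ax_0+b,\qquad y_1=\lambda a x_1+\mu b,\qquad y_2=\lambda a x_2+\mu b .
$$
I would interchange the order of summation and, for each fixed sextuple of coordinates, count the pairs $(a,b)$ solving this system of three linear equations in the two unknowns $(a,b)$. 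Its coefficient matrix has rows $(x_0,1)$, $(\lambda x_1,\mu)$, $(\lambda x_2,\mu)$, so it has rank $2$ exactly when $x_1\ne x_2$, and in that case there is at most one solution.

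For the generic sextuples with $x_1\ne x_2$ the system is solvable, and then by exactly one pair $(a,b)$, if and only if the bordered determinant vanishes, that is
$$
\det\begin{pmatrix} x_0 & 1 & y_0\\ \lambda x_1 & \mu & y_1\\ \lambda x_2 & \mu & y_2\end{pmatrix}=0 .
$$
After dividing out, this is precisely the collinearity condition one obtains by applying the substitution $(x,y)\mapsto(\lambda\mu^{-1}x,\mu^{-1}y)$, which carries the points of $\ell_{\lambda a,\mu b}$ to the points of $\ell_{a,b}$ (the same change of variables used in the proof of Lemma~\ref{lem:Mom12}). Under this identification the vanishing of the determinant is the defining relation of $\rT_{\lambda,\mu}(\cA,\cB)$ with its parameters matched by the substitution, so the generic solvable sextuples are in bijection with the triples counted by $\rT_{\lambda,\mu}(\cA,\cB)$; this is the main term.

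It then remains to compare the two sides on the degenerate sextuples and to see that the discrepancy is $O(|\cA|^2|\cB|^2)$. When $x_1=x_2$ the two lower equations coincide, so there is no solution unless also $y_1=y_2$, in which case the system collapses to the two-line problem of Lemma~\ref{lem:Mom12} and is accounted for compatibly by both sides, except on the rank-one diagonal $x_1=x_2=\mu\lambda^{-1}x_0$, $y_1=y_2=\mu y_0$. Each such diagonal sextuple is realised by $p$ pairs $(a,b)$ but is excluded from $\rT_{\lambda,\mu}(\cA,\cB)$ by the convention $u_i\ne\lambda_i w_i$; there are at most $|\cA\cap\lambda\mu^{-1}\cA|\,|\cB\cap\mu^{-1}\cB|\le|\cA||\cB|$ of them, contributing $O(p|\cA||\cB|)$. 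Together with the $O(|\cA|^2|\cB|^2)$ sextuples coming from the remaining coincidences among the three points and from the forbidden equalities $u_i=\lambda_i w_i$, this gives a total discrepancy $O(|\cA|^2|\cB|^2)$, the diagonal term being absorbed once $p\ll|\cA||\cB|$ (and being empty for the multiplicative subgroups of Theorem~\ref{thm:T(G)} when $\lambda,\mu$ lie outside the subgroup). Collecting the main term and these contributions yields the asserted identity.

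The step I expect to be the main obstacle is exactly this bookkeeping of the degenerate sextuples: one must line up the exclusions hard-wired into the definition of $\rT_{\lambda,\mu}(\cA,\cB)$, namely the forbidden equalities $u_i=\lambda_i w_i$ together with the coincidences among the three points, against the rank-deficient systems counted with multiplicity $p$, so that after these cancellations the error is controlled by $O(|\cA|^2|\cB|^2)$ and not merely by the a priori larger quantity $p|\cA||\cB|$.
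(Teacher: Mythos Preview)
Your approach is essentially the same as the paper's, only run in the opposite direction. The paper first rewrites the defining equation of $\rT_{\lambda,\mu}(\cA,\cB)$ as
\[
\frac{u_1-\lambda v_1}{u_2-\mu v_2}=\frac{u_1-\lambda w_1}{u_2-\mu w_2},
\]
absorbing the change in the excluded-denominator conventions into the $O(|\cA|^2|\cB|^2)$ term. It then lets $a$ be this common ratio and $b=u_1-au_2$, so that the three equalities $u_1-au_2=\lambda v_1-a\mu v_2=\lambda w_1-a\mu w_2=b$ hold and, for each fixed $(a,b)$, are counted by $\iota_{\cA,\cB}(\ell_{a,b})\,\iota_{\cA,\cB}(\ell_{\lambda a,\mu b})^2$. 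Summing over $(a,b)$ gives the RHS directly. This avoids your explicit rank analysis of the $3\times 2$ linear system because $a$ is introduced as a well-defined ratio (the denominator $u_2-\mu v_2$ is nonzero by convention), so each solution produces exactly one $(a,b)$ by construction rather than by a case split.

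You are right to flag the diagonal contribution. The rank-one sextuples you isolate do contribute $p\cdot|\cA\cap\lambda\mu^{-1}\cA|\cdot|\cB\cap\mu^{-1}\cB|$ to the RHS, which is not $O(|\cA|^2|\cB|^2)$ in full generality; the paper's proof is silent on this and implicitly relies on the applications. Every use of the lemma in the paper has $\cA=\cB=\cG$ a subgroup with $|\cG|^2\ge p$ (smaller subgroups are handled by Lemma~\ref{lem:sigma} instead), so $p|\cA||\cB|\le|\cA|^2|\cB|^2$ there and the issue is moot. Your caveat ``absorbed once $p\ll|\cA||\cB|$'' is therefore exactly the right resolution, and your bookkeeping is in fact more careful than the paper's. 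The only loose end in your sketch is the asserted identification of the bordered-determinant condition with the defining relation of $\rT_{\lambda,\mu}$; this is routine but should be written out, since the correspondence between $(x_0,y_0,x_1,y_1,x_2,y_2)$ and $(u_1,u_2,v_1,v_2,w_1,w_2)$ is easy to get slightly wrong.
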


\begin{proof} Transforming the equation~\eqref{eq:T_def} into 
$$
\frac{u_1-\lambda v_1}{u_2-\mu v_2} = \frac{u_1-\lambda w_1}{u_2-\mu w_2}, \qquad u_1,v_1,w_1\in \cA,  \ u_2,v_2,w_2\in \cB,  
$$
we introduce an error  of magnitude $ O(|\cA|^2 |\cB|^2)$
(coming from different pairs of variables which must be distinct). 
Then collecting, for every $a \in \F_p$, the solutions with
$$
\frac{u_1-\lambda v_1}{u_2-\mu v_2} = \frac{u_1-\lambda w_1}{u_2-\mu w_2} = a
$$
we derive:
$$
u_1-au_2 = \lambda v_1-a\mu v_2 = \lambda w_1-a\mu w_2.
$$
We now denote this common value by $b$ and observe that for any $(a,b) \in \F_p^2$ there are 
$  \iota_{\cA,\cB}\(\ell_{a,b}\)   \iota_{\cA,\cB}\(\ell_{\lambda a,\mu  b}\)^2$ solutions to 
\begin{equation}
\label{eq:Colin}
u_1-au_2 = \lambda v_1-a\mu v_2 = \lambda w_1-a\mu w_2= b.
  \end{equation}
Summing over all pairs $(a,b)  \in \F_p^2$, we obtain the result.
 \end{proof} 
 
\begin{cor}
\label{cor:T Asymp}
Let $\cA, \cB\subseteq \F_p$ and $\lambda,\mu \in \F_p^*$. Then
\begin{align*}
\rT_{\lambda, \mu} (\cA, \cB)  &  -  \frac{|\cA|^3|\cB|^3}{p} \\
&  = 
\sum_{(a,b) \in \F_p^2}   \iota_{\cA,\cB}\(\ell_{a,b}\)  \(  \iota_{\cA,\cB}\(\ell_{\lambda a,\mu  b}\)- \frac{|\cA||\cB|}{p} \right)^2
+O(|\cA|^2 |\cB|^2)\,. 
\end{align*}
\end{cor}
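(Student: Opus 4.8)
The plan is to derive this corollary from the two results that precede it by a purely algebraic rearrangement: I would start from the expression for $\rT_{\lambda,\mu}(\cA,\cB)$ in Lemma~\ref{lem: T-Mom3}, expand the square appearing in the claimed formula, and reduce the two extra terms so produced to the moment identities of Lemma~\ref{lem:Mom12}. To keep the bookkeeping transparent, abbreviate $I(a,b) = \iota_{\cA,\cB}\(\ell_{a,b}\)$ and $J(a,b) = \iota_{\cA,\cB}\(\ell_{\lambda a,\mu b}\)$. Expanding the square gives
$$
\sum_{(a,b) \in \F_p^2} I(a,b)\(J(a,b)-\frac{|\cA||\cB|}{p}\)^2 = \sum_{(a,b)} I J^2 - \frac{2|\cA||\cB|}{p}\sum_{(a,b)} I J + \frac{|\cA|^2|\cB|^2}{p^2}\sum_{(a,b)} I.
$$

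First I would apply the first identity of Lemma~\ref{lem:Mom12}, namely $\sum_{(a,b)} I(a,b) = p|\cA||\cB|$, so that the last term equals exactly $|\cA|^3|\cB|^3/p$. Next I would insert the second identity of Lemma~\ref{lem:Mom12}, $\sum_{(a,b)} I(a,b) J(a,b) = |\cA|^2|\cB|^2 - |\cA||\cB|^2 + p|\cA||\cB|$, into the middle term. After collecting, the copy of $2|\cA|^3|\cB|^3/p$ coming from the $p|\cA||\cB|$ piece of the middle term cancels against the last term up to a single remaining $-|\cA|^3|\cB|^3/p$, which is precisely the main term wanted on the left. The leftover contributions from the middle term are $-2|\cA|^2|\cB|^2$ and $+2|\cA|^2|\cB|^3/p$.

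The only point needing a remark is that these two leftovers are of admissible size. The first is manifestly $O(|\cA|^2|\cB|^2)$; for the second, since $\cB\subseteq\F_p$ forces $|\cB|\le p$, we get $2|\cA|^2|\cB|^3/p \le 2|\cA|^2|\cB|^2 = O(|\cA|^2|\cB|^2)$. Finally I would invoke Lemma~\ref{lem: T-Mom3} to rewrite $\sum_{(a,b)} I J^2 = \rT_{\lambda,\mu}(\cA,\cB) + O(|\cA|^2|\cB|^2)$, after which all the error contributions merge into the single $O(|\cA|^2|\cB|^2)$ permitted in the statement, yielding the asserted identity.

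I do not anticipate a genuine obstacle here, since the entire content is already carried by Lemmas~\ref{lem: T-Mom3} and~\ref{lem:Mom12}; the proof is an elementary expansion. The one place to stay careful is the error-term accounting, in particular the use of $|\cB|\le p$ to absorb the $|\cA|^2|\cB|^3/p$ contribution, and the preservation of the asymmetry between $\cA$ and $\cB$ that is already built into the second identity of Lemma~\ref{lem:Mom12}, so that no spurious terms are introduced when the roles of $\cA$ and $\cB$ differ.
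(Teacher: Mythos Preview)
Your argument is correct and is essentially the paper's own proof, which uses the equivalent identity $X^2=(X-Y)^2+2XY-Y^2$ to rearrange in the opposite direction and then invokes the same two lemmas. One minor bookkeeping slip: the $2|\cA|^3|\cB|^3/p$ contribution you cancel against the last term actually comes from the $|\cA|^2|\cB|^2$ piece of $\sum IJ$, not the $p|\cA||\cB|$ piece (which is what yields the $-2|\cA|^2|\cB|^2$ leftover), but your final identification of the leftovers and the conclusion are correct.
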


\begin{proof} 
Using the identity $X^2 = (X-Y)^2 +2XY-Y^2$ with $X =    \iota_{\cA,\cB}\(\ell_{\lambda a,\mu  b}\)$ and $Y =  |\cA| |\cB|/p$
we see that 
\begin{equation}
\begin{split}
\label{eq:TR1R2}
\sum_{(a,b) \in \F_p^2}  &   \iota_{\cA,\cB}\(\ell_{a,b}\)  \iota_{\cA,\cB}\(\ell_{\lambda a,\mu  b}\)^2\\
 & = \sum_{(a,b) \in \F_p^2}   \iota_{\cA,\cB}\(\ell_{a,b}\)  \(  \iota_{\cA,\cB}\(\ell_{\lambda a,\mu  b}\)- \frac{|\cA|^2}{p} \right)^2 +   R_1 -  R_2, 
\end{split}
\end{equation}
where
\begin{align*}
R_1 & = 2  \frac{|\cA||\cB|}{p}  \sum_{(a,b) \in \F_p^2}     \iota_{\cA,\cB}\(\ell_{a,b}\)
  \iota_{\cA,\cB}\(\ell_{\lambda a,\mu  b}\), \\
R_2 &= 
  \frac{|\cA|^2|\cB|^2}{p^2}   \sum_{(a,b) \in \F_p^2}     \iota_{\cA,\cB}\(\ell_{a,b}\).
\end{align*}
By Lemma~\ref{lem:Mom12}, after simple calculations,  we have 
$$
R_1 -  R_2 =  2  \( |\cA|^2|\cB|^2- |\cA|^2 |\cB|^3/p\)
 \ll  |\cA|^2|\cB|^2.
 $$
Combining this with~\eqref{eq:TR1R2} yields
\begin{align*}
&\sum_{(a,b) \in \F_p^2}     \iota_{\cA,\cB}\(\ell_{a,b}\)  \iota_{\cA,\cB}\(\ell_{\lambda a,\mu  b}\)^2 \\
 &  \quad =   \frac{|\cA|^3|\cB|^3}{p} + 
\sum_{(a,b) \in \F_p^2}   \iota_{\cA,\cB}\(\ell_{a,b}\)  \(  \iota_{\cA,\cB}\(\ell_{a,b}\)- \frac{|\cA|^2}{p} \right)^2+ O\( |\cA|^2|\cB|^2\). 
\end{align*}
Hence, using  Lemma~\ref{lem: T-Mom3}, we  obtain the result. 
\end{proof}

Given two sets $\cU, \cV \subseteq  \F_p$, we define $\rE^\times(\cU, \cV)$ to be the {\it multiplicative energy\/}
of $\cU$ and $\cV$, that is,    the number of solutions to 
$$
u_1v_1=u_2v_2, \qquad u_1,u_2 \in \cU, \ v_1, v_2 \in \cV.
$$
For $\cU = \cV$ we also write 
$$
\rE^\times(\cU) = \rE^\times(\cU, \cU).
$$
It is easy to see that for any subgroup of $\cG,\cH \subseteq\F_p^*$ and $\lambda,\mu \in \F_p^*$
we have 
\begin{equation}
\begin{split}
\label{eq:T and E}
\rT_{\lambda,\mu} (\cG, \cH)& = \sum_{(g,h)\in \cG\times \cH}
 \rE^\times (\cG-\lambda g,\cH-\mu h) +  O(|\cG|^3|\cH|).\\
 &= |\cG| |\cH| \rE^\times (\cG-\lambda ,\cH-\mu) +  O(|\cG|^3|\cH|),
\end{split}
\end{equation}
where the error term $O(|\cG|^3|\cH|)$ (which is obviously negative) accounts for zero values  of the linear forms in the definition of $\rT_{\lambda,\mu} (\cG, \cH)$. 

Finally, we need the following bound for small subgroups, which is a slightly simplified form of~\cite[Proposition~1]{Shkr2} combined with~\eqref{eq:T and E}.

\begin{lemma}
\label{lem:sigma}
Let $\cG$ be a subgroup of $\F_p^*$
with  $|\cG|\ge |\cH|$ and $|\cG||\cH| < p$. 
Then
$$
\rT_{\lambda,\mu} (\cG, \cH) \ll   |\cG|^3  |\cH| \log |\cG|  \,.
$$
\end{lemma}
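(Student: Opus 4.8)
The plan is to reduce the count of collinear triples to a multiplicative energy of shifted subgroups and then bound that energy. Combining the two lines of the identity~\eqref{eq:T and E}, we have
$$
\rT_{\lambda,\mu}(\cG,\cH) = |\cG|\,|\cH|\,\rE^\times(\cG-\lambda,\cH-\mu) + O\(|\cG|^3|\cH|\),
$$
in which the error is already of the required order. Hence it suffices to prove the energy bound
$$
\rE^\times(\cG-\lambda,\cH-\mu) \ll |\cG|^2 \log|\cG|,
$$
which, together with the factor $|\cG||\cH|$, yields the claimed estimate $\rT_{\lambda,\mu}(\cG,\cH)\ll|\cG|^3|\cH|\log|\cG|$. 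This energy bound is the (simplified) substance of~\cite[Proposition~1]{Shkr2}; I sketch how I would obtain it directly from the incidence bound of Lemma~\ref{lem:Mit}.

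First I would parametrise the energy by the common multiplicative ratio. Writing $t=(g_1-\lambda)/(g_2-\lambda)=(h_2-\mu)/(h_1-\mu)$ for a solution of $(g_1-\lambda)(h_1-\mu)=(g_2-\lambda)(h_2-\mu)$, one gets
$$
\rE^\times(\cG-\lambda,\cH-\mu) = \sum_{t} N_\cG(t)\,N_\cH(t) + O\(|\cG|^2\),
$$
where $N_\cG(t)$ is the number of pairs $(g_1,g_2)\in\cG^2$ with $g_1-\lambda=t(g_2-\lambda)$ and $N_\cH(t)$ is defined analogously from $\cH$ and $\mu$, the error absorbing the degenerate ratios $t\in\{0,1\}$. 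The key observation is that for $t\neq1$ the defining equation rewrites as $u_t g_1+v_t g_2=1$ with $(u_t,v_t)=\(\tfrac{1}{\lambda(1-t)},\tfrac{-t}{\lambda(1-t)}\)$, so each $N_\cG(t)$ is exactly an incidence count of the form appearing in Lemma~\ref{lem:Mit}, and likewise for $N_\cH(t)$.

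I would then split the ratios into dyadic levels $T_j=\{t:2^{j-1}<N_\cG(t)\le 2^j\}$, of which there are only $O(\log|\cG|)$ since $N_\cG(t)\le|\cG|$. On each level I would bound $\sum_{t\in T_j}N_\cH(t)$ by Lemma~\ref{lem:Mit}, taking $\varTheta=\{(u_t,v_t):t\in T_j\}$, and bound $|T_j|$ either trivially via $\sum_t N_\cG(t)\ll|\cG|^2$ or, for the popular ratios, again by Lemma~\ref{lem:Mit} applied to $\cG$. The hypotheses $|\cG|\ge|\cH|$ and $|\cG||\cH|<p$ are used precisely to verify the admissibility condition $|\varTheta|\le\min\{|\cG||\cH|,\,p^3/(|\cG|^2|\cH|^2)\}$ required by Lemma~\ref{lem:Mit}. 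Summing the per-level estimates over the $O(\log|\cG|)$ scales then produces the single logarithmic factor in the target bound.

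The step I expect to be the main obstacle is the bookkeeping for the popular ratios: one must balance the trivial count $|T_j|\ll|\cG|^2 2^{-j}$ against the sharper incidence count on $|T_j|$, and check that the Mit'kin admissibility threshold is respected at every scale. This balancing, rather than the reduction~\eqref{eq:T and E} or the final summation, is the technical core, and it is exactly where the constraint $|\cG||\cH|<p$ enters.
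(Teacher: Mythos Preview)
Your reduction via~\eqref{eq:T and E} to the energy bound $\rE^\times(\cG-\lambda,\cH-\mu)\ll|\cG|^2\log|\cG|$, followed by an appeal to~\cite[Proposition~1]{Shkr2}, is exactly how the paper obtains the lemma: it is stated there as ``a slightly simplified form of~\cite[Proposition~1]{Shkr2} combined with~\eqref{eq:T and E}'' and no further proof is given.

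Your additional sketch of how~\cite[Proposition~1]{Shkr2} itself is proved (dyadic levels of $N_\cG(t)$, then Lemma~\ref{lem:Mit}) is in the right spirit, but note one slip: the incidence counts $N_\cG(t)$ and $N_\cH(t)$ live in $\cG\times\cG$ and $\cH\times\cH$ respectively, so Lemma~\ref{lem:Mit} is applied with equal subgroups (as the paper remarks just before that lemma), and the admissibility conditions are $|\varTheta|\le\min\{|\cG|^2,p^3/|\cG|^4\}$ and $|\varTheta|\le\min\{|\cH|^2,p^3/|\cH|^4\}$, not the mixed one you wrote. You would also need to pass from the parameters $(u_t,v_t)$ to coset representatives in $\cM_\cG\times\cM_\cG$ before invoking Lemma~\ref{lem:Mit}. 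These are routine fixes and do not affect the overall strategy.
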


\subsection{Initial reductions}

The argument below follows~\cite{Shkr2, Shkr4}. 

First of all, note that  Lemma~\ref{lem:sigma} implies the required result provided 
$|\cG||\cH| < p$ while the bound~\eqref{eq:T-Gen U}  implies it for $|\cG| \ge p^{2/3}$.

So it remains to consider the case 
$$
p^{2/3} > |\cG| > p^{1/2}.
$$

Let $\Delta\ge 3$ be a parameter to be chosen  later.
Using Corollaries~\ref{cor:BKT-Deviate} and~\ref{cor:T Asymp}, we obtain
\begin{equation}\label{eq:T Sigma}
\rT_\lambda(\cG) - \frac{|\cG|^6}{p}  \ll 
 |\cG|^4 + \Delta |\cG|^2 p  + W \,, 
\end{equation}
where 
$$
W =  \sum_{\substack{(a,b) \in \F_p^2\\ \iota_\cG\(\ell_{a,b}\)> \Delta}} \iota_\cG\(\ell_{a,b}\) 
 \( \iota_\cG\(\ell_{a, \lambda b}\)- \frac{|\cG|^2}{p} \right)^2.
$$

Clearly, the contribution  to $W$ from lines  with $ab= 0$, is at most $|\cG|^4$
as in this case  $\iota_\cG\(\ell_{a,b}\) = 0$ unless $a\in \cG$ or $b\in \cG$, in which case 
$\iota_\cG\(\ell_{a,b}\) = |\cG|$. 
Therefore, 
$$
  \sum_{\substack{(a,b) \in \F_p^2\\ ab = 0}}\iota_\cG\(\ell_{a,b}\)  
  \( \iota_\cG\(\ell_{a, \lambda b}\)- \frac{|\cG|^2}{p} \right)^2  =  O\(|\cG|^4\). 
$$
 Thus
\begin{equation}
\label{eq:Sigmas}
W = W^* + O\(|\cG|^4\)
\end{equation}
where
$$
W^* =  \sum_{\substack{(a,b) \in (\F_p^*)^2\\ \iota_\cG\(\ell_{a,b}\)> \Delta}} \iota_\cG\(\ell_{a,b}\)  \( \iota_\cG\(\ell_{a, \lambda b}\)- \frac{|\cG|^2}{p} \right)^2, 
$$
which is the sum we now consider.
%
%


Returning  to~\eqref{eq:T_def}, we see that the quantity $\rT_\lambda(\cG)$ is equal, up to the error
$O(|\cG|^4)$,  which can be absorbed in the same error term that  is already present
in~\eqref{eq:T Sigma}, 
to the number of  solutions of the equation 
\begin{equation}
\begin{split}
\label{eq:Tmod}
(u_1- v_1)(u_2-\lambda w_2)& = (u_1-w_1)(u_2-\lambda v_2)\ne 0,  \\ 
u_i,v_i,w_i & \in \cG,  \ i=1,2.
\end{split}
\end{equation}

\subsection{Sets $ \varTheta_\tau$ and $\cQ_\tau$}
Let, as before, $\cM_\cG$  be a set of distinct coset representatives 
of $\cG$   in $\F_p^*$.
Take another  parameter $\tau  \ge \Delta$ and put
$$
    \varTheta_\tau= \{ (\alpha, \beta) \in \cM_\cG^2  ~:~ | \{ (x,y) \in \cG^2~:~\alpha x+ \beta y=1 \} | \ge \tau \} \,.
$$
In other words, $\varTheta_\tau$  is the set of $(\alpha, \beta) \in \cM_\cG^2$ for which
the lines 
\begin{equation}\label{eq:L and l}
\cL_{\alpha, \beta} = \{ (x,y)\in \F_p^2~:~\alpha x+ \beta y=1 \} =  \ell_{-\alpha \beta^{-1}, \beta^{-1}}
\end{equation}
have the intersection with $\cG^2$  of size at least 
$$ 
\iota_\cG\(\ell_{-\alpha \beta^{-1}, \beta^{-1}}\) \ge \tau.
$$
In particular, 
\begin{equation}
\label{eq: Theta-tau}
 \varTheta_\tau =
  \{  (\alpha, \beta)  \in  \cM_\cG^2 ~:~  \iota_\cG\(\cL_{\alpha, \beta}\)  \ge \tau \}.
\end{equation}

By Lemma~\ref{lem:Mit}, we have $|\varTheta_\tau| \tau \ll (|\cG||\varTheta_\tau|)^{2/3}$
provided
\begin{equation}
\label{eq:cond1}
|\cG|^4 |\varTheta_\tau| < p^3
\end{equation}
and 
\begin{equation}
\label{eq:cond2}
|\varTheta_\tau| \le   |\cG|^2.
\end{equation}

We also define the set 
\begin{equation}
\label{eq: Q-tau}
\cQ_\tau =
  \{  (\alpha, \beta)  \in \(\F_p^*\)^2 ~:~  \iota_\cG\(\cL_{\alpha, \beta}\)  \ge \tau \}.
\end{equation}
Comparing~\eqref{eq: Theta-tau} and~\eqref{eq: Q-tau}, we see that we can think of
$\varTheta_\tau$ as of an union of cosets $\cQ_\tau/\cG$. 
Clearly, we have 
\begin{equation}\label{eq:q_tau}
|\cQ_\tau| = |\cG|^2 |\varTheta_\tau|   \ll |\cG|^4 \tau^{-3}
\end{equation}
provided the conditions~\eqref{eq:cond1} and~\eqref{eq:cond2} are satisfied.

The condition~\eqref{eq:cond2}  is trivial to verify. Indeed, since
$ |\cG|^2 > p$, we have 
$$
|\varTheta_\tau| \le   |\cM_\cG|^2 = (p-1)^2/  |\cG|^{2}  \le  |\cG|^{2} 
$$
and thus~\eqref{eq:cond2}  holds.

We now show that  the condition~\eqref{eq:cond1}  also holds
for the following choice 
\begin{equation}
\label{eq: Delta}
\Delta = c |\cG|^3 p^{-3/2},
\end{equation}
with a sufficiently large constant $c$  (recalling that  $|\cG|> p^{1/2}$ we see that the 
condition $\Delta \ge 3$ is satisfied). 

\begin{lemma}
\label{lem:Q_tau} For $\Delta$ given by~\eqref{eq: Delta} the bound~\eqref{eq:cond1}  holds.
\end{lemma}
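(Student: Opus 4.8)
The plan is to establish~\eqref{eq:cond1} \emph{unconditionally}, that is, without invoking Lemma~\ref{lem:Mit}, since that lemma already presupposes~\eqref{eq:cond1} and using it here would be circular. The only analytic input I need is the trivial second-moment estimate of Corollary~\ref{cor:BKT-Deviate}, applied with $\cA=\cB=\cG$, namely
$$
\sum_{(a,b)\in\F_p^2}\(\iota_\cG\(\ell_{a,b}\)-\frac{|\cG|^2}{p}\)^2\le p|\cG|^2 .
$$

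First I would record the counting identity $|\cQ_\tau|=|\cG|^2|\varTheta_\tau|$, noting that \emph{this equality is unconditional} (only the sharper bound $\ll|\cG|^4\tau^{-3}$ in~\eqref{eq:q_tau} rests on Lemma~\ref{lem:Mit}). Indeed, the value $\iota_\cG(\cL_{\alpha,\beta})$ depends only on the cosets $\alpha\cG,\beta\cG$: replacing $(\alpha,\beta)$ by $(g\alpha,g'\beta)$ with $g,g'\in\cG$ amounts to the substitution $(x,y)\mapsto(gx,g'y)$ in the defining equation $\alpha x+\beta y=1$, so $\cQ_\tau$ is a union of coset boxes $\alpha\cG\times\beta\cG$ indexed by $\varTheta_\tau$. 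Then, using the bijection~\eqref{eq:L and l} between $(\F_p^*)^2$ and the lines $\ell_{a,b}$ with $a,b\neq 0$, I would bound
$$
|\cQ_\tau|\le N_\tau:=\left|\{(a,b)\in\F_p^2~:~\iota_\cG\(\ell_{a,b}\)\ge\tau\}\right|.
$$

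Next comes the main step. I would fix the constant $c\ge 2$ in~\eqref{eq: Delta}; since $|\cG|>p^{1/2}$ this forces $\Delta=c|\cG|^3p^{-3/2}\ge c|\cG|^2/p\ge 2|\cG|^2/p$, so that every line counted in $N_\tau$ (recall $\tau\ge\Delta$) satisfies $\iota_\cG(\ell_{a,b})-|\cG|^2/p\ge\tau/2$. Feeding this into the second-moment bound gives $N_\tau\cdot(\tau/2)^2\le p|\cG|^2$, whence $|\varTheta_\tau|\le N_\tau/|\cG|^2\le 4p/\tau^2\le 4p/\Delta^2$. Substituting $\Delta=c|\cG|^3p^{-3/2}$ and using $|\cG|^2>p$ then yields
$$
|\cG|^4|\varTheta_\tau|\le\frac{4p|\cG|^4}{\Delta^2}=\frac{4p^4}{c^2|\cG|^2}\le\frac{4p^3}{c^2}<p^3
$$
as soon as $c>2$, which is precisely~\eqref{eq:cond1}.

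The only genuine obstacle is conceptual rather than computational: one must resist bounding $|\varTheta_\tau|$ by the sharp estimate~\eqref{eq:q_tau}, as that depends on Lemma~\ref{lem:Mit} and hence on~\eqref{eq:cond1} itself. The role of this lemma is exactly to supply the weaker, self-contained bound $|\varTheta_\tau|\ll p/\tau^2$ from the trivial $L^2$ estimate, which already suffices to unlock the later application of Lemma~\ref{lem:Mit}. The secondary point of care is to guarantee that the threshold dominates the mean, i.e. $\Delta\ge 2|\cG|^2/p$, so that on the relevant lines the deviation $\iota_\cG-|\cG|^2/p$ is comparable to $\iota_\cG$; this is where the hypothesis $|\cG|>p^{1/2}$ (via the choice of $c$) is used.
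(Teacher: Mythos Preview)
Your proof is correct, and it is genuinely different from the paper's argument. The paper proceeds by contradiction: assuming $|\varTheta_\tau|>p^3/|\cG|^4$, it invokes the classical point--line incidence bound
$$
I(\cP,\cL)\le |\cL|^{1/2}|\cP|+|\cL|
$$
(see~\cite[Corollary~5.2]{BKT} or~\cite[Exercise~8.2.1]{TaoVu}) with $\cP=\cG^2$ and $\cL=\cQ_\tau$ to obtain $|\cQ_\tau|\tau^2\ll|\cG|^4$, and then derives a contradiction with the choice~\eqref{eq: Delta}. You instead extract the same upper bound $|\cQ_\tau|\tau^2\ll p|\cG|^2\,(<|\cG|^4)$ directly from the second-moment estimate of Corollary~\ref{cor:BKT-Deviate}, which is an internal tool of the paper already established by an elementary computation. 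Numerically the two routes coincide in the range $|\cG|>p^{1/2}$, and your approach is more self-contained and avoids importing an external incidence result. The trade-off is that the paper's formulation makes transparent where a sharper incidence theorem (cf.\ Remark~\ref{rem:Incid}) could in principle feed in, whereas your $L^2$ bound is already tight. Your observation that one must \emph{not} appeal to~\eqref{eq:q_tau} here (to avoid circularity with Lemma~\ref{lem:Mit}) is exactly right, and the paper's proof respects the same constraint for the same reason.
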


\begin{proof} 
Suppose, to the contrary,  that 
\begin{equation}
\label{eq:Contr}
|\varTheta_\tau| > p^3 / |\cG|^4\, .
\end{equation}
Whence, 
the number of incidences between points of $\cP = \cG^2$ 
and the lines $\cL_{\alpha, \beta}$ as above  with $ (\alpha, \beta) \in \cQ_\tau$ is at least 
\begin{equation}
\label{eq:L-Bound}
|\cQ_\tau| \tau =  |\cG|^2 |\varTheta_\tau|  \tau  > p^3 |\cG|^{-2} \Delta\,.
\end{equation}
On the other hand,  by a classical result which holds over any field 
(see, for example~\cite[Corollary 5.2]{BKT} 
or~\cite[Exercise~8.2.1]{TaoVu}) the number of incidences for 
any set of points $\cP$ 
and a set of  lines $\cQ_\tau$ is at most $|\cQ_\tau|^{1/2} |\cP| + |\cQ_\tau|$.
Hence 
\begin{equation}
\label{eq:Incid}
|\cQ_\tau| \tau \le |\cQ_\tau|^{1/2} |\cP| + |\cQ_\tau|
\end{equation}
 and we obtain
\begin{equation}
\label{eq:U-Bound}
|\cQ_\tau| \tau^2 \ll  |\cP|^2  =  |\cG|^4.
\end{equation}
Combining~\eqref{eq:L-Bound} and~\eqref{eq:U-Bound}, we derive
\begin{equation}
\label{eq: Bounds}
p^3 |\cG|^{-2} \Delta < |\cQ_\tau| \tau   \ll  |\cG|^4 \tau^{-1} \le   |\cG|^4 \Delta^{-1} .
\end{equation}
Recalling that  $|\cG| \ge p^{1/2} $,  we see that for $\Delta$ given by~\eqref{eq: Delta}  
with a sufficiently large constant $c$ 
the inequalities~\eqref{eq: Bounds} are impossible, which also shows that our assumption~\eqref{eq:Contr} is false and this concludes the proof. 
\end{proof}


\subsection{Concluding the proof of Theorem~\ref{thm:T(G)}}

We now define 
$$
\cR_\tau =
 \left \{  (\alpha, \beta)  \in \(\F_p^*\)^2 ~:~ 
  \max \left\{\iota_\cG\(\cL_{\alpha, \beta}\), \iota_\cG\(\cL_{\alpha, \lambda \beta}\) \right\} \ge \tau \right\}.
 $$

By Lemma~\ref{lem:Q_tau}, for the  choice~\eqref{eq: Delta} of $\Delta$ we have the desired condition~\eqref{eq:cond1}
for any $\tau \ge \Delta$. Hence, the bound~\eqref{eq:q_tau} also implies 
that 
\begin{equation}\label{eq:r_tau}
|\cR_\tau| = |\cG|^2 |\varTheta_\tau|   \ll |\cG|^4 \tau^{-3}.
\end{equation}

We see from~\eqref{eq:L and l} that there is a one-to-one correspondence 
between the lines $\ell_{a,b}$, $(a,b)\in \(\F_p^*\)^2$ and the lines 
$\cL_{\alpha, \beta}$, $(\alpha, \beta)\in \(\F_p^*\)^2$. 
We now define
$$
\tau_j = e^{j}\Delta,  \qquad j =0,1, \ldots, J,
$$
where 
$$
J = \rf{\log (|\cG|/\Delta)}.
$$
Note that due to the choice of $\Delta$ and the condition  $|\cG| \ge p^{1/2}$ we have
$$
\tau_j \ge \tau_0 =\Delta \gg  |\cG|^3 p^{-3/2} \ge   |\cG|^2/p ,  \qquad j =0,1, \ldots, J.
$$
Then, recalling also the bound~\eqref{eq:r_tau}, we conclude that the contribution to $W^*$ from the lines with 
$\tau_{j+1}\ge \iota_\cG\(\ell_{a,b}\)> \tau_j$ is bounded
by 
\begin{equation}\label{eq:Qj}
\left|\cQ_{\tau_j}\right| \tau_{j+1} \(\tau_{j+1} + |\cG|^2/p\)^2 \ll \left|\cQ_{\tau_j}\right| \tau_{j+1}^3
 \ll  |\cG|^4 .
\end{equation}
Summing up~\eqref{eq:Qj} we obtain
$$
W^* \ll  J  |\cG|^4  \ll \ |\cG|^4 \log |\cG|.
$$
Substituting this bound in~\eqref{eq:Sigmas} and 
combining it with~\eqref{eq:T Sigma}, we 
obtain
$$
 \rT_\lambda(\cG) = \frac{|\cG|^6}{p}  + O\( |\cG|^5 p^{-1/2}+  |\cG|^4 \log |\cG|\) \,
 $$
 in the range $p^{2/3} \ge |\cG| \ge p^{1/2}$, which concludes the proof.

\begin{remark}
\label{rem:Incid}
In principle, a stronger version of the classical incidence bound which is used~\eqref{eq:Incid}
may lead to improvements of Theorem~\ref{thm:T(G)}. However, the range where such improvements 
are known is far away from the range which appears in our applications, see~\cite{StdeZe}.
\end{remark}

\section{Trinomial  sums}
\subsection{Preliminaries}
%
%

We recall the following classical  bound of  bilinear sums, 
see, for example,~\cite[Lemma~4.1]{Gar}.

\begin{lemma}
\label{lem:bilin} 
For any sets $\cX, \cY \subseteq \F_p$ and any  $\alpha= (\alpha_{x})_{x\in \cX}$, $\beta = \( \beta_{y}\)_{y \in \cY}$, 
with 
$$
\max_{x\in \cX}|\alpha_{x}| \le 1 \mand  \max_{y \in \cY}|\beta_{y}| \le 1
$$
we have 
$$
\left |\sum_{x \in \cX}\sum_{y \in \cY} \alpha_{x} \beta_{y}  \ep(xy) \right| \le \sqrt{p|X||Y|}.
$$
\end{lemma}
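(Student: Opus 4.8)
The plan is to reduce the bilinear sum to a second-moment estimate by a single application of the Cauchy--Schwarz inequality, followed by the orthogonality of the additive characters $\ep$. This is the standard ``large sieve'' argument, and since the weights satisfy $\max_x|\alpha_x|\le 1$ and $\max_y|\beta_y|\le 1$, the bound will come out cleanly with no extraneous factors.

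First I would separate the two variables by writing
$$
S = \sum_{x \in \cX}\sum_{y \in \cY} \alpha_{x} \beta_{y}  \ep(xy)
= \sum_{x\in\cX}\alpha_x\(\sum_{y\in\cY}\beta_y\ep(xy)\)
$$
and apply Cauchy--Schwarz in the variable $x$. Using $\sum_{x\in\cX}|\alpha_x|^2\le|\cX|$, this yields
$$
|S|^2\le|\cX|\sum_{x\in\cX}\left|\sum_{y\in\cY}\beta_y\ep(xy)\right|^2.
$$
Next I would enlarge the range of the outer summation from $\cX$ to all of $\F_p$; since every term is nonnegative this only increases the right-hand side, and it is exactly what makes the orthogonality relation below exact.

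The key step is then to expand the square and sum over the full field:
$$
\sum_{x\in\F_p}\left|\sum_{y\in\cY}\beta_y\ep(xy)\right|^2
=\sum_{y_1,y_2\in\cY}\beta_{y_1}\overline{\beta_{y_2}}\sum_{x\in\F_p}\ep\(x(y_1-y_2)\).
$$
By orthogonality the inner sum equals $p$ when $y_1=y_2$ and vanishes otherwise, so the whole expression collapses to $p\sum_{y\in\cY}|\beta_y|^2\le p|\cY|$. Substituting back gives $|S|^2\le p|\cX||\cY|$, which is the claimed bound after taking square roots.

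There is no serious obstacle here. The only genuine choices are which variable to apply Cauchy--Schwarz to (immaterial, by the symmetry of the roles of $\cX$ and $\cY$) and the extension of the $x$-range to $\F_p$, which removes any dependence on the additive structure of $\cX$ and reduces everything to character orthogonality. The hypotheses on the weights enter only through the trivial estimates $\sum_{x\in\cX}|\alpha_x|^2\le|\cX|$ and $\sum_{y\in\cY}|\beta_y|^2\le|\cY|$.
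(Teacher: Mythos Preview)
Your argument is correct and is exactly the standard Cauchy--Schwarz plus orthogonality proof of this classical bilinear bound. The paper does not supply its own proof but simply cites it as well known (referring to~\cite[Lemma~4.1]{Gar}), so there is nothing further to compare.
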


\begin{definition}[Ratios of differences]
For a set  $\cU \subseteq\F^*_p$, we define $D_\times(\cU)$ to be the number of solutions of 
$$
(u_1-v_1)(u_2-v_2) = (u_3-v_3)(u_4-v_4), \qquad u_i,v_i \in \cU,\  i=1,2,3,4,
$$
\end{definition}

As before we define $\rT(\cU)$  as the number of solutions to~\eqref{eq:T_def}. 

We now recall the following bound from~\cite[Lemma~2.7]{PetShp}.

\begin{lemma} \label{Bound Dx}
For any set $\cU \subseteq\F_p^*$ with $|\cU|=U$, we have
$$
D_\times(\cU) \ll U^2\rT(\cU) + U^6.
$$
\end{lemma}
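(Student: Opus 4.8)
The plan is to open up the product structure in $D_\times(\cU)$ by introducing the common ratio of the two factorisations, to recognise the resulting count as a second moment of incidences of $\cU\times\cU$ with lines of a fixed slope, and then to apply the Cauchy--Schwarz inequality slope by slope so as to produce exactly the third moment that equals $\rT(\cU)$ via Lemma~\ref{lem: T-Mom3}. The gain of the factor $U^2$ will come from the trivial identity that the lines of a fixed slope carry $|\cU|^2=U^2$ incidences in total.

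First I would dispose of the degenerate solutions. Writing $a=u_1-v_1$, $b=u_2-v_2$, $c=u_3-v_3$, $d=u_4-v_4$, if any one of these vanishes then both sides of the defining equation equal $0$, which forces a vanishing factor on each side; a crude count shows there are $O(U^6)$ such solutions, so they may be absorbed into the error term. For the remaining solutions all four differences are nonzero, and from $ab=cd$ I set $\lambda=a/c=d/b\in\F_p^*$. The constraints then decouple as $u_1-v_1=\lambda(u_3-v_3)$ and $u_4-v_4=\lambda(u_2-v_2)$, over two disjoint blocks of four variables each. Hence, with
$$
N(\lambda)=\#\{(x,y,x',y')\in\cU^4~:~x-y=\lambda(x'-y')\},
$$
the number of nondegenerate solutions is at most $\sum_{\lambda\in\F_p^*}N(\lambda)^2$.

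Next I would express $N(\lambda)$ through line incidences. Rewriting $x-y=\lambda(x'-y')$ as $x-\lambda x'=y-\lambda y'$ and collecting solutions according to the common value $w=x-\lambda x'$, each class is counted by the number of $(x',x)\in\cU\times\cU$ lying on the line $\ell_{\lambda,w}$, that is, by $\iota_{\cU,\cU}(\ell_{\lambda,w})$. Thus $N(\lambda)=\sum_{w}\iota_{\cU,\cU}(\ell_{\lambda,w})^2$, while the identity $\sum_{w}\iota_{\cU,\cU}(\ell_{\lambda,w})=|\cU|^2=U^2$ records that the lines of a fixed slope $\lambda$ partition the grid. Applying Cauchy--Schwarz in the form $\iota^2=\iota^{1/2}\cdot\iota^{3/2}$ gives, for each fixed $\lambda$,
$$
N(\lambda)^2\le\Big(\sum_{w}\iota_{\cU,\cU}(\ell_{\lambda,w})\Big)\Big(\sum_{w}\iota_{\cU,\cU}(\ell_{\lambda,w})^3\Big)=U^2\sum_{w}\iota_{\cU,\cU}(\ell_{\lambda,w})^3.
$$

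Finally I would sum over $\lambda\in\F_p^*$ and compare with $\rT(\cU)$. Since the lines $\ell_{\lambda,w}$ with $\lambda\neq0$ form a subset of all non-vertical lines $\ell_{a,b}$, the double sum $\sum_{\lambda,w}\iota_{\cU,\cU}(\ell_{\lambda,w})^3$ is at most $\sum_{(a,b)\in\F_p^2}\iota_{\cU,\cU}(\ell_{a,b})^3$, which by Lemma~\ref{lem: T-Mom3} (taken with $\cA=\cB=\cU$ and $\lambda=\mu=1$) equals $\rT(\cU)+O(U^4)$. Combining the displays yields $D_\times(\cU)\le U^2\rT(\cU)+O(U^6)$, as required. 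I expect the only delicate points to be the careful accounting of the degenerate $O(U^6)$ contribution and the verification that the slope parametrisation decouples the two factors without double counting; the analytic heart, the per-slope Cauchy--Schwarz together with $\sum_w\iota_{\cU,\cU}(\ell_{\lambda,w})=U^2$, is very short once the incidence reformulation of $N(\lambda)$ is in place.
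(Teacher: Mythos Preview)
Your argument is correct. The paper itself does not prove this lemma; it merely quotes it from~\cite[Lemma~2.7]{PetShp}, so there is no in-paper proof to compare against. Your route --- decoupling the eight-variable equation via the common ratio $\lambda=(u_1-v_1)/(u_3-v_3)=(u_4-v_4)/(u_2-v_2)$ into two independent four-variable constraints counted by $N(\lambda)$, writing $N(\lambda)=\sum_w\iota_{\cU,\cU}(\ell_{\lambda,w})^2$, applying Cauchy--Schwarz slope by slope against the trivial identity $\sum_w\iota_{\cU,\cU}(\ell_{\lambda,w})=U^2$ to reach the third moment, and then invoking Lemma~\ref{lem: T-Mom3} with $\cA=\cB=\cU$ and $\lambda=\mu=1$ to identify $\sum_{(a,b)}\iota_{\cU,\cU}(\ell_{a,b})^3=\rT(\cU)+O(U^4)$ --- is clean and self-contained, and the $O(U^6)$ accounting for the degenerate solutions is routine.
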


Combining Lemma~\ref{Bound Dx} with Theorem~\ref{thm:T(G)} we obtain
$$
D_\times(\cG) \ll \frac{|\cG|^8}{p} +  \left\{
\begin{array}{ll}
p^{1/2} |\cG|^{11/2},& \text{if $ |\cG| \ge p^{2/3}$},\\
|\cG|^7 p^{-1/2} , & \text{if $p^{2/3} > |\cG| \ge p^{1/2}\log p$},\\
|\cG|^6 \log |\cG|, & \text{if $|\cG|< p^{1/2}\log p$}. 
\end{array}
\right.
$$

Since for $|\cG| \ge  \(p  \log p\)^{1/2}$ the first term dominates, this simplifies as
\begin{cor} \label{Bound Dx2}
For a multiplicative subgroup $\cG \subseteq \F_p^*$, we have
$$
D_\times(\cG) \ll   \left\{
\begin{array}{ll}
|\cG|^8 p^{-1} , & \text{if $|\cG| \ge  \(p  \log p\)^{1/2}$},\\
|\cG|^6 \log |\cG|, & \text{if $|\cG|<  \(p  \log p\)^{1/2}$}. 
\end{array}
\right.
$$
\end{cor}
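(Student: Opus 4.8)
The plan is to feed the estimate of Theorem~\ref{thm:T(G)} into the inequality $D_\times(\cG) \ll |\cG|^2 \rT(\cG) + |\cG|^6$ of Lemma~\ref{Bound Dx}, applied with $\cU = \cG$ and using $\rT(\cG) = \rT_1(\cG)$, and then to simplify the outcome by deciding, in each range of $|\cG|$, which term is largest. Multiplying the three-case bound of Theorem~\ref{thm:T(G)} by $|\cG|^2$ and adding $|\cG|^6$ yields exactly the intermediate three-case estimate displayed before the statement, the main term being $|\cG|^8/p$ and the three error terms being $p^{1/2}|\cG|^{11/2}$, $|\cG|^7 p^{-1/2}$ and $|\cG|^6\log|\cG|$ according as $|\cG| \ge p^{2/3}$, $p^{2/3} > |\cG| \ge p^{1/2}\log p$, or $|\cG| < p^{1/2}\log p$. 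Everything then reduces to comparing $|\cG|^8/p$ against these three errors.

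Next I would carry out the three comparisons directly. Against $p^{1/2}|\cG|^{11/2}$ the main term wins precisely when $|\cG| \ge p^{3/5}$, which holds throughout the range $|\cG| \ge p^{2/3}$; against $|\cG|^7 p^{-1/2}$ it wins precisely when $|\cG| \ge p^{1/2}$, which holds throughout the range $|\cG| \ge p^{1/2}\log p$; and against $|\cG|^6\log|\cG|$ it wins precisely when $|\cG|^2 \ge p\log|\cG|$. Since the thresholds satisfy $(p\log p)^{1/2} < p^{1/2}\log p < p^{2/3}$, the single cut-off $|\cG| = (p\log p)^{1/2}$ lies inside the smallest range $|\cG| < p^{1/2}\log p$, so for $|\cG| \ge (p\log p)^{1/2}$ all three error terms are dominated by $|\cG|^8/p$ (the first two by the computations above, the third because $|\cG|^2 \ge p\log p \ge p\log|\cG|$), giving the first branch of the Corollary, while for $|\cG| < (p\log p)^{1/2}$ the error is $|\cG|^6\log|\cG|$ and one checks $|\cG|^8/p \ll |\cG|^6\log|\cG|$, giving the second branch.

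The one genuinely delicate point is the comparison of $|\cG|^8/p$ with $|\cG|^6\log|\cG|$ near the cut-off, where the natural inequality involves $\log|\cG|$ rather than $\log p$. The hard part is therefore to verify $|\cG|^2 \ll p\log|\cG|$ throughout $|\cG| < (p\log p)^{1/2}$: for $|\cG| \ge p^{1/4}$ one has $\log|\cG| \asymp \log p$ together with $|\cG|^2 < p\log p$, while for $|\cG| < p^{1/4}$ one has $|\cG|^2 < p^{1/2} \ll p \le p\log|\cG|$ (the regimes of very small $|\cG|$ being trivial since then the term $|\cG|^6$ already dominates). This confirms that $(p\log p)^{1/2}$ is exactly the right threshold and completes the simplification.
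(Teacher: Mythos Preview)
Your proposal is correct and follows exactly the approach of the paper: apply Lemma~\ref{Bound Dx} with $\cU=\cG$, substitute the three-case bound of Theorem~\ref{thm:T(G)} for $\rT(\cG)=\rT_1(\cG)$, and simplify. The paper compresses the entire simplification into the single sentence ``Since for $|\cG| \ge (p\log p)^{1/2}$ the first term dominates'', whereas you have spelled out each pairwise comparison and handled the $\log|\cG|$ versus $\log p$ issue near the threshold explicitly; these details are accurate and nothing is missing.
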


%


Substituting in Corollary~\ref{Bound Dx2} into the proof of~\cite[Theorem~1.3]{PetShp}, we obtain the following result for trilinear sums over subgroups, which improves its general bound. 

\begin{lemma} \label{lem:Bound T3}
For any multiplicative subgroups $\cF, \cG, \cH \subseteq \F_p^*$ of cardinalities $F, G, H$, respectively, with 
$F \ge G \ge  H$
and weights 
$\rho= (\rho_{u,v})$,  $\sigma = (\sigma_{u,w})$ and $\tau=(\tau_{v,w})$
with 
$$
\max_{(u,v) \in \cF \times \cG} |\rho_{u,v}| \le 1, \quad 
\max_{(u,w) \in \cF \times \cH} |\sigma_{u,w}| \le 1, \quad 
\max_{(v,w) \in \cG \times \cH} |\tau_{v,w}| \le 1,  
$$
for the sum
$$
T = \sum_{u \in\cF} \sum_{v \in \cG}
 \sum_{w\in \cH} \rho_{u,v} \sigma_{u,w} \tau_{v,w} \ep(auvw)
$$
we have
$$
T\ll  \left\{
\begin{array}{ll}
F^{7/8}GH , & \text{if $ H \ge  \(p  \log p\)^{1/2}$},\\
p^{1/16}F^{7/8}GH^{7/8 } \(\log p\)^{1/16}, & \text{if $ G \ge  \(p  \log p\)^{1/2}>H$},\\
p^{1/8}F^{7/8}G^{7/8}H^{7/8}\(\log p\)^{1/8}, & \text{if $G<  \(p  \log p\)^{1/2}$}, 
\end{array}
\right.  
$$
uniformly over $a\in \F_p^*$.
\end{lemma}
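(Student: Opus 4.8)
The plan is to reproduce the argument of~\cite[Theorem~1.3]{PetShp} almost verbatim, the single new ingredient being that at the step where one needs to bound the fourth-difference moments $D_\times$ of the subgroups that survive the reduction, we feed in the sharper estimate of Corollary~\ref{Bound Dx2} rather than the bound valid for arbitrary sets that comes from Lemma~\ref{Bound Dx} together with general estimates for $\rT$. This is exactly what yields an improvement over the general trilinear bound in the subgroup case.

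Concretely, I would strip the three pairwise weights $\rho_{u,v}$, $\sigma_{u,w}$, $\tau_{v,w}$ one at a time by Cauchy--Schwarz. Removing the weight $\tau_{v,w}$ attached to the two smaller variables over $\cG\times\cH$ reduces $|T|^2$ to $GH\sum_{v,w}|\Phi(v,w)|^2$, where $\Phi(v,w)=\sum_{u\in\cF}\rho_{u,v}\sigma_{u,w}\ep(auvw)$; expanding the square brings in the phase $\ep(a(u_1-u_2)vw)$, so that differences and products of the subgroup elements enter. The genuinely oscillating contributions are controlled by the bilinear bound of Lemma~\ref{lem:bilin}, which supplies the $p^{1/2}$-type savings, while the remaining Cauchy--Schwarz steps turn the degenerate (diagonal) configurations into the moments $D_\times(\cG)$ and $D_\times(\cH)$ of the two smaller subgroups; the largest subgroup $\cF$ is handled so as to contribute the uniform factor $F^{7/8}$.

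It is at this point that I would insert Corollary~\ref{Bound Dx2}. Since $F\ge G\ge H$, the active branch of Corollary~\ref{Bound Dx2} is determined solely by the position of $G$ and $H$ relative to $(p\log p)^{1/2}$: both large in the first case, only $H$ small in the second, and both $G$ and $H$ small in the third. These are precisely the three regimes of the statement, which is why the final bound depends on $G$ and $H$ but not on a further split of $F$. The main obstacle is the final optimisation rather than any individual estimate: the~\cite{PetShp} argument carries a free threshold parameter separating the diagonal and the bilinear ranges, and after the substitution this parameter must be re-chosen in each regime so as to balance the $D_\times$ contribution against the bilinear one. Tracking this optimisation across the nested case splits --- the one already built into Corollary~\ref{Bound Dx2} and the one coming from the threshold --- is what produces the exact exponents $7/8$ together with the logarithmic factors $(\log p)^{1/16}$ and $(\log p)^{1/8}$, and one must check that the two families of thresholds are compatible at the breakpoints $G,H=(p\log p)^{1/2}$. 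Verifying this bookkeeping is the delicate part of the proof.
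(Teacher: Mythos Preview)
Your overall plan is the paper's plan: follow the~\cite{PetShp} reduction to the quantity $K$ counting solutions of
$(u_1-u_2)(w_1-w_2)=(u_3-u_4)(w_3-w_4)\ne 0$ with $u_i\in\cG$, $w_i\in\cH$, bound $K^2\le D_\times(\cG)D_\times(\cH)$ by Cauchy, and feed in Corollary~\ref{Bound Dx2}. Two points in your description are off, though, and they affect exactly the ``bookkeeping'' step you flag as delicate.

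First, there is no free threshold parameter to optimise. What~\cite[Equation~(3.8)]{PetShp} hands you is the fixed inequality
\[
T^8 \ \ll\ p\,F^{7}G^{4}H^{4}\,K \;+\; F^{8}G^{8}H^{6},
\]
with two additive terms and nothing to tune. All you do is insert the three bounds for $K$ coming from Corollary~\ref{Bound Dx2} (namely $K\ll G^4H^4/p$, $G^4H^3 p^{-1/2}(\log p)^{1/2}$, and $(GH)^3\log p$ in the three regimes) and read off the first term; the exponents $7/8$ and the logarithmic powers drop out immediately, with no balancing.

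Second, the role of Lemma~\ref{lem:bilin} in the paper is not inside the Cauchy--Schwarz chain but \emph{after} it, as a competing bound. The diagonal term $F^{8}G^{8}H^{6}$ gives $T\ll FGH^{3/4}$, which can dominate the main term in parts of the second and third regimes (precisely when $F/H>p^{1/2}$, respectively $FG>pH$). In those sub-ranges you instead invoke the direct bilinear estimate $T\ll p^{1/2}F^{1/2}G^{1/2}H$ from Lemma~\ref{lem:bilin} (trivially summing over $\cH$), and a short comparison shows this is already majorised by the claimed bound there. So the ``compatibility at the breakpoints'' you worry about is not a parameter-matching issue but a two-line check that $p^{1/2}F^{1/2}G^{1/2}H$ beats $FGH^{3/4}$ exactly where needed.
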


\begin{proof}
We see from~\cite[Equation~(3.8)]{PetShp} that 
\begin{equation*}
T^8 \ll  p F^7 G^4 H^4 K + F^8 G^8 H^6, 
\end{equation*}
where $K$ is the number of solutions to the equation 
\begin{align*}
(u_1-u_2)(w_1-w_2) &= (u_3-u_4)(w_3-w_4) \ne 0, \\ \
 (u_i,w_i) \in \cG&\times \cH, \quad i =1,2,3,4.
\end{align*}
As in the proof of~\cite[Theorem~1.3]{PetShp}, expressing $K$ via multiplicative 
character sums and using the Cauchy inequality, we obtain 
 $K^2 \le D_\times(\cG)D_\times(\cH)$. Applying Corollary~\ref{Bound Dx2}, 
 instead of~\cite[Equation~3.9]{PetShp}, we now obtain 
\begin{align*}
K \ll  \left\{
\begin{array}{ll}
G^4H^4/p , & \text{if $ H \ge \(p  \log p\)^{1/2}$},\\
G^4 H^{3}p^{-1/2} (\log p)^{1/2} , & \text{if $ G \ge\(p  \log p\)^{1/2}>H$},\\
(GH)^3\log p, & \text{if $G< \(p  \log p\)^{1/2}$}. 
\end{array}
\right.
\end{align*}
We now
deal with the three cases separately. 

For $H\ge \(p  \log p\)^{1/2}$ we have
$$
T^8 \ll F^7G^8H^8 + F^8G^8H^6 .
$$
Since 
$F < p < H^2$, 
the first term dominates, and we obtain
\begin{equation}\label{eq T1}
T \ll F^{7/8}GH.
\end{equation}

For $G\ge \(p  \log p\)^{1/2} > H$, we have
$$
T^8 \ll p^{1/2}F^7G^8 H^7 (\log p)^{1/2}+ F^8G^8H^6
$$
or 
\begin{equation}
\label{eq T2 prelim}
T \ll p^{1/16}F^{7/8}GH^{7/8}\(\log p\)^{1/16}+FGH^{3/4}.
\end{equation}
The first term of \eqref{eq T2 prelim} dominates for $p^{1/2}\ge F/H$. 

We now  note that 
 by Lemma~\ref{lem:bilin} and the trivial bound for the sum over $\cH$, we also have
\begin{equation}
\label{eq T trivial}
T\ll p^{1/2}F^{1/2}G^{1/2}H. 
\end{equation}
Furthermore, since  for  $F > p^{1/2}H$ and $G > p^{1/2}$ we have
\begin{align*}
p^{1/2}F^{1/2}G^{1/2}H & =   p^{1/16}F^{7/8}GH^{7/8} \left(\frac{p^{7/2} H}{F^3G^4}\right)^{1/8} \\
&< p^{1/16}F^{7/8}GH^{7/8} \left(\frac{p^3}{F^2G^4}\right)^{1/8}  < p^{1/16}F^{7/8}GH^{7/8}, 
\end{align*}
we see that for $G\ge \(p  \log p\)^{1/2} > H$ the bound~\eqref{eq T2 prelim}
simplifies as
\begin{equation}\label{eq T2}
T \le p^{1/16}F^{7/8}GH^{7/8}\(\log p\)^{1/16}.
\end{equation}

For $G<\(p\log p\)^{1/2}$, we have 
$$
T^8 \ll pF^7G^{7}H^{7}\log p + F^8G^8H^6
$$
or 
\begin{equation} 
\label{eq T3 prelim}
T \ll p^{1/8}F^{7/8}G^{7/8}H^{7/8} \(\log p\)^{1/8}+FGH^{3/4}.
\end{equation}
The first term of~\eqref{eq T3 prelim} dominates  for $pH\ge FG$.
Otherwise,  that is, for $pH< FG$, we have
\begin{align*}
p^{1/2}F^{1/2}G^{1/2}H &= p^{1/8}F^{7/8}G^{7/8}H^{7/8} \left(\frac{p^3H}{F^3G^3}\right)^{1/8} \\
&<   p^{1/8}F^{7/8}G^{7/8}H^{7/8} \left(\frac{1}{H^2}\right)^{1/8}  \le p^{1/8}F^{7/8}G^{7/8}H^{7/8}.
\end{align*}
Thus, using~\eqref{eq T trivial} we see that  the bound~\eqref{eq T3 prelim}
simplifies as
\begin{equation}\label{eq T3}
T \le p^{1/8}F^{7/8}G^{7/8}H^{7/8} \(\log p\)^{1/8}. 
\end{equation}

Combining~\eqref{eq T1}, \eqref{eq T2} and~\eqref{eq T3}, 
we complete the proof.
\end{proof}

Clearly, the bound of Lemma~\ref{lem:Bound T3} is nontrivial when $F$, $G$ and $H$ 
are all a little larger than $p^{1/3}$. More formally, for any $\varepsilon > 0$ there exists some 
$\delta>0$ such that if 
$F \ge G \ge H \ge p^{1/3+\varepsilon}$ then  the exponential sums 
of Lemma~\ref{lem:Bound T3} are bounded by $O\(FGH p^{-\delta}\)$. 
%
%

\subsection{Proof of Theorem \ref{thm:Bound3}}

Let $\cG_d$ and $\cG_e$ be the 
subgroups of $\F_p^*$ formed by the elements of orders dividing $d$ and $e$, respectively.

We have,
\begin{align*}
S_\chi(&\Psi)  = \frac{1}{de} \sum_{y \in \cG_d} \sum_{z \in \cG_e} \sum_{x\in \F_p^*}  \chi(xyz)\ep(\Psi(xyz))\\
& =  \frac{1}{de}  \sum_{x\in \F_p^*}  \sum_{y \in \cG_d}  \sum_{z \in \cG_e} 
 \chi(x) \chi(y) \chi(z)  \ep\(ax^kz^k + bx^\ell y^\ell + cx^my^mz^m\) \\
& =  \frac{1}{de}  \sum_{x\in \F_p^*} \sum_{z \in \cG_e}  
 \sum_{y \in \cG_d} \rho_{x,y}\sigma_{x,z}   \ep\(cx^my^mz^m\),
\end{align*}
where 
\begin{align*}
 \rho_{x,y} & = \chi(x) \chi(y) \ep\(bx^\ell y^\ell\) \mand \sigma_{x,z}=  \chi(z) \ep\(ax^kz^k\).
 \end{align*}

Clearly,   the set $\cX = \{x^m~:~x \in \F_p^*\}$ of non-zero $m$th powers  contains $(p-1)/f$ elements, each appearing 
 with multiplicity $f$. Furthermore, direct examination shows that  the sets  
 $\cY= \{y^m~:~y \in \cG_d\}$ and   $\cZ=\{z^m~:~z \in~\cG_e\}$ contain $g$ and $h$
elements  with multiplicities  $\gcd(d,f)$ and $\gcd(e,f)$, respectively. 
We recall that by our assumption we have $f \ge g \ge h$ and invoke 
Lemma~\ref{lem:Bound T3}, which gives us,
\begin{align*}
S_\chi(\Psi)&\ll \frac{f\gcd(d,f)   \gcd(e,f)}{de} \times \\ &\qquad \quad  \left\{
\begin{array}{ll}
(p/f)^{7/8}gh, & \text{if $h\ge p^{1/2}\log p$},\\
p^{1/16}(p/f)^{7/8}gh^{7/8} \(\log p\)^{1/16}, & \text{if $ g \ge \(p  \log p\)^{1/2}>h$},\\
p^{1/8}(p/f)^{7/8}g^{7/8}h^{7/8}\(\log p\)^{1/8}, & \text{if $g< \(p  \log p\)^{1/2}$}, 
\end{array}
\right. \\
&=  \left\{
\begin{array}{ll}
p^{7/8}f^{1/8}, & \text{if $h\ge \(p  \log p\)^{1/2}$},\\
p^{15/16}f^{1/8}h^{-1/8} \(\log p\)^{1/16}, & \text{if $ g \ge \(p  \log p\)^{1/2}>g$},\\
pf^{1/8}g^{-1/8}h^{-1/8} \(\log p\)^{1/8}, & \text{if $g< \(p  \log p\)^{1/2}$}. 
\end{array}
\right.
\end{align*}
This concludes the proof.

\section{Further Applications}
\label{sec:Appl}

\subsection{Additive properties of subgroups}
\label{sec:add prop subgr}

As usual, given a rational function 
$$R(X_1, \ldots, X_m) \in
\F_p(X_1, \ldots, X_m), 
$$ 
and $m$ sets $\cA_1, \ldots, \cA_m
\subseteq \F_p$, we define the set
 \begin{align*}
R(\cA_1,& \ldots, \cA_m) \\
& =\{R(a_1, \ldots, a_m) \ : \ (a_1, \ldots,  a_m)
\in \(\cA_1 \times \ldots \times \cA_m\)\setminus \cP_R\}, 
\end{align*}
where $\cP_R$ is the set of poles of $R$. 

We note that we have used $\cA^m$ for the 
$m$-fold Cartesian product   rather than for the  $m$-fold  product-set of a set $\cA$
as  the previous definition suggests. 
However neither of these notations is used in this section. 

For a scalar $\lambda \in \F_p$ we use the notation
$$
\lambda \cA = \{\lambda\} \cdot \cA  = \{\lambda a~:~a \in \cA\}, 
$$
for sets of multiples  of $\cA \subseteq \F_p$.

Applying the bound of Theorem~\ref{thm:T(G)} to  cosets of $\cG$, that is, to $T(\cG, \lambda \cG)$, 
and using~\eqref{eq:T and E} 
we obtain:

\begin{cor}
\label{cor:EnergyShiftSubgr}
Let $\cG$ be a multiplicative subgroup of $\F_p^*$. 
Then for any   $\lambda \in \F_p^* $, we have 
$$
\rE^\times (\cG + \lambda) -\frac{|\cG|^4}{p} \ll  \left\{
\begin{array}{ll}
p^{1/2} |\cG|^{3/2},& \text{if $ |\cG| \ge p^{2/3}$},\\
|\cG|^3 p^{-1/2} , & \text{if $p^{2/3} > |\cG| \ge p^{1/2}\log p$},\\
|\cG|^2 \log |\cG|, & \text{if $|\cG|< p^{1/2}\log p$}. 
\end{array}
\right.
$$
\end{cor}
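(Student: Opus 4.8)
The plan is to realise the multiplicative energy of the shifted subgroup as a collinear-triple count and then invoke Theorem~\ref{thm:T(G)}. The bridge is the identity~\eqref{eq:T and E}. Recall that $\rE^\times$ is invariant under dilation of its arguments, and that for a subgroup one has $\cG-\mu g = g(\cG-\mu)$ whenever $g\in\cG$; hence every summand $\rE^\times(\cG-\mu g,\cG-\mu h)$ appearing in the first form of~\eqref{eq:T and E} equals $\rE^\times(\cG-\mu)$, which is precisely how the second, closed form of~\eqref{eq:T and E} arises. I record this scaling observation for use below.

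To make the single-set energy $\rE^\times(\cG+\lambda)=\rE^\times(\cG+\lambda,\cG+\lambda)$ appear, I apply~\eqref{eq:T and E} with $\cH=\cG$ and both shift parameters equal to $-\lambda$, since $\cG-(-\lambda)=\cG+\lambda$. This yields
$$
\rT_{-\lambda,-\lambda}(\cG,\cG)=|\cG|^2\,\rE^\times(\cG+\lambda)+O\(|\cG|^4\),
$$
and therefore
$$
\rE^\times(\cG+\lambda)=|\cG|^{-2}\,\rT_{-\lambda,-\lambda}(\cG,\cG)+O\(|\cG|^2\).
$$
Thus everything reduces to estimating the collinear-triple count $\rT_{-\lambda,-\lambda}(\cG,\cG)$ with the same quality as in Theorem~\ref{thm:T(G)}.

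The one point that needs care, and which I expect to be the only real obstacle, is that Theorem~\ref{thm:T(G)} is literally stated for $\rT_\lambda(\cG)=\rT_{1,\lambda}(\cG,\cG)$, whereas the energy of a \emph{single} shifted subgroup forces the two shift parameters to coincide and to be nontrivial, producing the equal-parameter two-set quantity $\rT_{-\lambda,-\lambda}(\cG,\cG)$. I would dispose of this by observing that all three ingredients behind Theorem~\ref{thm:T(G)} already hold in the required two-subgroup generality: the small range is covered by Lemma~\ref{lem:sigma} (stated for $\rT_{\lambda,\mu}(\cG,\cH)$), the large range $|\cG|\ge p^{2/3}$ by the estimate~\eqref{eq:T-Gen U} (stated for arbitrary sets), and the intermediate range by the verbatim extension of the proof noted in Remark~\ref{rem:G and H}. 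Consequently $\rT_{-\lambda,-\lambda}(\cG,\cG)$ has main term $|\cG|^6/p$ and the same three-case error term as Theorem~\ref{thm:T(G)}.

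Finally I substitute this bound into the displayed identity and divide through by $|\cG|^2$. The main term $|\cG|^6/p$ becomes $|\cG|^4/p$, matching the claim, while the three error terms $p^{1/2}|\cG|^{7/2}$, $|\cG|^5p^{-1/2}$, $|\cG|^4\log|\cG|$ are each scaled by $|\cG|^{-2}$ to produce exactly $p^{1/2}|\cG|^{3/2}$, $|\cG|^3p^{-1/2}$, $|\cG|^2\log|\cG|$. The leftover $O(|\cG|^2)$ coming from~\eqref{eq:T and E} is harmless, since in each of the three ranges the asserted error already dominates $|\cG|^2$ (using $|\cG|\le p$ in the first range and $|\cG|\ge p^{1/2}\log p$ in the second). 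This completes the reduction.
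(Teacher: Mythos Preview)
Your proposal is correct and follows essentially the same route as the paper. The paper's one-line proof (``Applying the bound of Theorem~\ref{thm:T(G)} to cosets of $\cG$, that is, to $T(\cG,\lambda\cG)$, and using~\eqref{eq:T and E}'') is precisely what you have unpacked: you invoke~\eqref{eq:T and E} with both shift parameters equal to $-\lambda$ to pass from $\rE^\times(\cG+\lambda)$ to a collinear-triple count, and you justify extending Theorem~\ref{thm:T(G)} to $\rT_{-\lambda,-\lambda}(\cG,\cG)$ via Remark~\ref{rem:G and H} together with the already-general Lemma~\ref{lem:sigma} and~\eqref{eq:T-Gen U}. The paper's phrase ``to cosets'' encodes the same extension, since through Lemma~\ref{lem: T-Mom3} the shift parameters in $\rT_{\lambda,\mu}$ amount to replacing one copy of $\cG$ by a coset in the incidence count $\iota_{\cG,\cG}$; your absorption of the $O(|\cG|^2)$ remainder is also handled correctly.
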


Note that for $|\cG|/\sqrt{ p \log p}\to \infty$,  Corollary~\ref{cor:EnergyShiftSubgr} 
gives an asymptotic formula for $\rE^\times (\cG + \lambda)$; otherwise we only have an
upper bound.

\begin{cor}
\label{cor:AddRepr}For  a multiplicative subgroup $\cG$ of $\F_p^*$ and   $\lambda, \mu \in  \F_p^*$ 
we define the sets 
$$
\cS_1 = \cG +\lambda \cG + \mu \cG
\mand 
\cS_2 =
\left\{ \frac{u-   \lambda  }{v -\mu} ~:~  u,v \in 
\cG \right\}. 
$$
We have:
\begin{itemize}
\item if  $|\cG| \ge p^{2/3}$ then $\F_p^* \subseteq \cS_1$ and $\F_p^* \subseteq \cG \cS_2$;
\item if  $|\cG| \le (p \log p)^{1/2}$ then,  for $i=1,2$, 
$$
|\cS_i| \gg \frac{|\cG|^2}{\log |\cG|}\,;
$$
\item otherwise, for $i=1,2$, 
$$
p - |\cS_i| \ll  \left \{
\begin{array}{ll} p^{5/2}|\cG|^{-5/2},& \text{if $ |\cG| \ge p^{2/3}$},\\
 p^{3/2} |\cG|^{-1} , & \text{if $p^{2/3} >|\cG| \ge p^{1/2}\log p$},\\
p^{2} |\cG|^{-2}\log p, & \text{if $p^{1/2}\log p \ge |\cG|>  (p \log p)^{1/2}$}.
\end{array}
\right.
$$
\end{itemize}
\end{cor}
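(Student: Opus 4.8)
The plan is to treat the two sets in parallel through a second-moment (variance) estimate, reducing each density or covering claim to one of the energy bounds of Corollary~\ref{cor:EnergyShiftSubgr}. For a set $\cS$ realised as the image of a map on a product of copies of $\cG$, let $r(n)$ be the number of preimages of $n$. Then Cauchy--Schwarz in the form $\left(\sum_n r(n)\right)^2 \le |\cS|\sum_n r(n)^2$ gives a lower bound on $|\cS|$, while
$$
\sum_n\left(r(n) - \frac{1}{p}\sum_m r(m)\right)^2 = \sum_n r(n)^2 - \frac{1}{p}\left(\sum_m r(m)\right)^2
$$
bounds the number of uncovered points (those with $r(n)=0$) by the $\ell^2$-deviation of $r$ from its mean. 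For $\cS_2$ the count $r_2(t)=\#\{(u,v)\in\cG^2:(u-\lambda)/(v-\mu)=t\}$ satisfies $\sum_t r_2(t)=|\cG|^2+O(|\cG|)$ and $\sum_t r_2(t)^2=\rE^\times(\cG-\lambda,\cG-\mu)+O(|\cG|^2)$, the error terms accounting for the excluded value $v=\mu$ and for zero factors. For $\cS_1$ the count $r_1(n)=\#\{(a,b,c)\in\cG^3:a+\lambda b+\mu c=n\}$ satisfies $\sum_n r_1(n)=|\cG|^3$ and $\sum_n r_1(n)^2=E_1$, the additive energy of the three cosets $\cG,\lambda\cG,\mu\cG$.

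I would carry out $\cS_2$ first, as it is the clean model. The decisive step is to pass from the two-shift energy to the single-shift energy of Corollary~\ref{cor:EnergyShiftSubgr}: expanding in multiplicative characters and applying Cauchy--Schwarz gives
$$
\rE^\times(\cG-\lambda,\cG-\mu) \le \rE^\times(\cG-\lambda)^{1/2}\,\rE^\times(\cG-\mu)^{1/2},
$$
and since the estimate of Corollary~\ref{cor:EnergyShiftSubgr} does not depend on the shift, both factors obey the same bound. Distinguishing the three ranges then settles $i=2$: in the small range $|\cG|\le (p\log p)^{1/2}$ the term $|\cG|^2\log|\cG|$ dominates, so the Cauchy--Schwarz lower bound yields $|\cS_2|\ge|\cG|^4/\rE^\times\gg|\cG|^2/\log|\cG|$; in the remaining ranges the main term $|\cG|^4/p$ dominates, and the variance estimate gives
$$
p-|\cS_2|\le \frac{p^2}{|\cG|^4}\left(\rE^\times(\cG-\lambda,\cG-\mu)-\frac{|\cG|^4}{p}\right),
$$
which, upon inserting the three error terms $p^{1/2}|\cG|^{3/2}$, $|\cG|^3p^{-1/2}$, $|\cG|^2\log|\cG|$ of Corollary~\ref{cor:EnergyShiftSubgr}, reproduces the complement bounds $p^{5/2}|\cG|^{-5/2}$, $p^{3/2}|\cG|^{-1}$, $p^2|\cG|^{-2}\log p$.

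For $\cS_1$ the identical variance computation gives $p-|\cS_1|\le\frac{p^2}{|\cG|^6}\left(E_1-|\cG|^6/p\right)$ together with $|\cS_1|\ge|\cG|^6/E_1$, so the whole case reduces to the additive-energy excess $E_1-|\cG|^6/p$. I expect this to be the main obstacle: the relevant energy is now additive, hence not directly an instance of Corollary~\ref{cor:EnergyShiftSubgr}. The plan is to control $E_1$ by the collinear-triple count via~\eqref{eq:T and E}, which identifies $\rT_{\lambda,\mu}(\cG)$ with $|\cG|^2\rE^\times(\cG-\lambda,\cG-\mu)$ up to $O(|\cG|^4)$. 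The dimensional bookkeeping shows that the bound one needs is exactly $E_1-|\cG|^6/p\ll|\cG|^2\left(\rE^\times(\cG-\lambda)-|\cG|^4/p\right)$, that is, the additive energy of the three cosets is governed by the same quantity as the number of collinear triples in $\cG$. Establishing this comparison (for instance through the Fourier identity $E_1=\frac1p\sum_t|S(t)|^2|S(\lambda t)|^2|S(\mu t)|^2$ with $S(t)=\sum_{g\in\cG}\ep(tg)$, and a transfer to the multiplicative-energy estimate) is the crux of $i=1$; granting it, the three ranges follow precisely as for $\cS_2$.

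Finally, the qualitative covering for $|\cG|\ge p^{2/3}$ cannot be extracted from the complement bounds alone, since these only yield $p-|\cS_i|\ll p^{5/6}$ at $|\cG|=p^{2/3}$. Instead I would use a pigeonhole reduction: a point $n$ fails to lie in $\cS_1=\cG+\lambda\cG+\mu\cG$ exactly when the coset-translate $n-\mu\cG$, of size $|\cG|$, is disjoint from the sumset $\cG+\lambda\cG$, and likewise $n\notin\cG\cS_2$ forces the translate $n\cG-n\mu$ to avoid $\cG-\lambda\cG$. Both coverings therefore follow once the two-coset sum/difference set $\cG\pm\nu\cG$ is shown to have complement of size less than $|\cG|$, for then no set of cardinality $|\cG|$ can fit in that complement. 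This largeness holds in the regime $|\cG|\ge p^{2/3}$, where the governing energy sits at its main term; verifying it down to the exact threshold $p^{2/3}$ is the delicate point of the covering statement, and is where the sharpness of the large-subgroup estimate (equivalently the bound~\eqref{eq:T-Gen U}) is needed.
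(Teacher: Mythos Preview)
Your variance framework for $\cS_2$ is correct and essentially matches the paper: the second moment of the representation function is $\rE^\times(\cG-\lambda,\cG-\mu)$ up to $O(|\cG|^2)$, Cauchy--Schwarz in characters reduces to the single-shift energies, and Corollary~\ref{cor:EnergyShiftSubgr} delivers both the lower bound and the complement bound in the three ranges.

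For $\cS_1$, however, there is a genuine gap. Your argument hinges on the inequality
\[
E_1-\frac{|\cG|^6}{p}\;\ll\;|\cG|^2\!\left(\rE^\times(\cG-\lambda)-\frac{|\cG|^4}{p}\right),
\]
but you do not prove it, and the proposed ``Fourier identity plus transfer'' is not a known route from the three-coset \emph{additive} energy to the \emph{multiplicative} energy of a shifted subgroup. The paper avoids this entirely by exploiting the one structural fact you never use: $\cG\cdot\cG=\cG$. Expanding $(g+\lambda)(h+\mu)=gh+\mu g+\lambda h+\lambda\mu$ with $gh\in\cG$ gives
\[
(\cG+\lambda)(\cG+\mu)\;\subseteq\;\cG+\mu\cG+\lambda\cG+\lambda\mu=\cS_1+\lambda\mu,
\]
so $|\cS_1|\ge|(\cG+\lambda)(\cG+\mu)|$. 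The right-hand side is now a product set, and Cauchy--Schwarz bounds it below by $|\cG|^4/F$ with
\[
F\le\sqrt{\rE^\times(\cG+\lambda)\,\rE^\times(\cG+\mu)}+O(|\cG|^2),
\]
which is precisely the quantity handled by Corollary~\ref{cor:EnergyShiftSubgr}. This single containment replaces your unproven additive--multiplicative comparison and yields both the lower bound and, after expanding $(1+\fQ)^{-1}$, the complement bound.

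Your covering argument at $|\cG|\ge p^{2/3}$ also falls short. Requiring $p-|\cG+\lambda\cG|<|\cG|$ and using only the Gauss-sum bound $|S(t)|\le p^{1/2}$ in the variance estimate gives $p-|\cG+\lambda\cG|\le p^3/|\cG|^3$, which is $<|\cG|$ only for $|\cG|>p^{3/4}$, not $p^{2/3}$. The paper instead uses $\cS_1\cG=\cS_1$: if some $\xi\notin\cS_1$ then $x+\lambda y+\mu z=\xi w$ has no solutions in $\cG^4$, and orthogonality followed by $|S(t)|^2\le p$ on two of the four factors and Parseval on the remaining two yields $0\ge|\cG|^4-p^2|\cG|$, a contradiction exactly when $|\cG|\ge p^{2/3}$. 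The analogous identity for $\cG\cS_2$ is $\lambda u_1-u_2=\xi(\mu v_1-v_2)$, treated the same way.
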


\begin{proof}
We consider the set $\cS_1$ first. 

First we show that $\cS_1 \supseteq \F^*_p$, provided $|\cS_1| \ge p^{2/3}$.
Clearly, the set $\cS_1$ satsifies the property $\cS_1 \cG = \cS_1$ and hence if $\cS_1 \supseteq \F^*_p$, then there is a nonzero $\xi$ such that $\cS_1 \cap \xi \cG = \emptyset$. 
In other words, the equation
$$
	x+\lambda y+z\mu =\xi w \,,\quad \quad x,y,z,w\in \cG
$$
has no solutions. By the orthogonality property of exponential functions, 
this means that  for the sum
$$
\sigma = 	\sum_{a \in \F_p} \sum_{x \in \cG} \ep (ax) \sum_{y \in \cG} \ep (a\lambda y)  
\sum_{z \in \cG} \ep (a\mu z)  \sum_{w \in \cG} \ep (-a\xi w)  
$$
we have
$$
\sigma  = 0.
$$
Clearly, the contribution of $\sigma$ corresponding to $a = 0$ equals $|\cG|^4$. 
Using the  well-known  bound 
$$
\left|\sum_{x\in \F_p} \ep(bx^{k}) \right | \le (k-1) p^{1/2}, \qquad b \in  \F_p^*, 
$$
see, for example~\cite[Equation~(1)]{HBK},  combined with the identity
$$
\sum_{z\in \cG} \ep(bz) = \frac{1}{k}\sum_{x\in \F_p^*} \ep(bx^{k}), 
$$
where $k = (p-1)/|\cG|$, 
we obtain
$$
	0 = \sigma \ge |\cG|^4 - p \sum_{a \in \F^*_p} \left|\sum_{x \in \cG} \ep (ax) \right| \left|\sum_{y \in \cG} \ep (a\lambda y) \right| \,.
$$
By the Cauchy inequality, we get
$$
	0 > |\cG|^4 - p^2 |\cG| \ge 0 
$$
and this is a contradiction which gives the result for  $|\cG| \ge p^{2/3}$.

We now consider  subgroups with $|\cG| < p^{2/3}$. 
Clearly
$$
|\cS_1 |  = | \cG +\lambda \cG + \mu \cG+\lambda\mu | \ge |(\cG+\lambda)(\cG+\mu)| . 
$$
For $a \in \F_p^*$, we let $N(a)$ be the number of solutions to $(g+\lambda)(h+\mu) = a$ with $g,h\in \cG$.
Clearly
$$
\sum_{a \in \F_p} N(a)= |\cG|^2.
$$
Hence, by the Cauchy inequality, we have
$$
|\cG|^4 =  \(\sum_{a \in \F_p} N(a)\)^2 \le  |(\cG+\lambda)(\cG+\mu)| \sum_{a \in \F_p} N(a)^2
= |(\cG+\lambda)(\cG+\mu)| F, 
$$
where $F$ is the number of solutions to 
$$
(g_1+\lambda)(h_1+\mu) = (g_2+\lambda)(h_2+\mu), \qquad g_1,g_2,h_1,h_2\in \cG.
$$
There are obviously $O(|\cG|^2)$ solutions when 
$$
(g_1+\lambda)(h_1+\mu) = (g_2+\lambda)(h_2+\mu)=0.
$$
For the other solutions we  repeat  the same argument as in the above. 
That is, for every  $a \in \F_p$, we first collect together solutions with the same value
$$
\frac{g_1+\lambda}{g_2+\lambda}=\frac{h_1+\mu}{h_2+\mu}=a.
$$ 
After this, using  the Cauchy inequality again, we obtain
$$
F \le \sqrt{ \rE^\times (\cG+\lambda)  \rE^\times (\cG+\mu)} + O(|\cG|^2).
$$
Hence, putting the above inequalities together, we derive, 
$$
|\cS_1 | \gg  \frac{|\cG|^4}{\sqrt{ \rE^\times (\cG+\lambda)  \rE^\times (\cG+\mu)}+ O(|\cG|^2)} . 
$$
Hence, using Corollary~\ref{cor:EnergyShiftSubgr},  we derive the result for $\cS_1$.
Indeed, let $\fR$ be the bound on $\left|\rE^\times (\cG + \lambda) -|\cG|^4/p\right|$ given by Corollary~\ref{cor:EnergyShiftSubgr}.
It is easy to see that for  $\cG$ to which  the upper bound on $p - |\cS_i|$ applies 
we have 
$$
\frac{|\cG|^4}{p} \gg \fR . 
$$
Hence 
\begin{align*}
 \rE^\times (\cG+\lambda)  \rE^\times (\cG+\mu) &
=\frac{|\cG|^8}{p^2} + O\(\frac{|\cG|^4}{p}\fR + \fR^2\) \\
&  = \frac{|\cG|^8}{p^2} + O\(\frac{|\cG|^4}{p}\fR\) \\
& = \frac{|\cG|^8}{p^2} \(1 + O\(\frac{p}{|\cG|^4}\fR\)\) 
\end{align*}
which, together with $\fR \gg |\cG|^2$,  implies 
\begin{align*}
\sqrt{ \rE^\times (\cG+\lambda)  \rE^\times (\cG+\mu)} 
+ O(|\cG|^2) &\\
  =  \frac{|\cG|^4}{p} \(1 + O\(\frac{p}{|\cG|^4}\fR\)\) & + O(|\cG|^2)  
  =  \frac{|\cG|^4}{p} \(1 + \fQ\),
\end{align*}
where 
$$
\fQ \ll \frac{p}{|\cG|^4}\fR. 
$$
 We note that by adjusting the implied constant in the upper bound on $p - |\cS_1|$ we see that one can actually assume that $|\cG| \ge C_0 (p \log p)^{1/2}$ for some sufficiently large absolute constant $C_0$, so that $|\fQ| \le 1/2$. 
 In this case 
 $$
  \(1 + \fQ\)^{-1} = 1 + O(\fQ) =  1+ O\(\frac{p}{|\cG|^4}\fR\)
 $$
 and the bound on $p - |\cS_1|$ follows. For the lower bound on $|S_1|$ we simply remark that the error term $\fR$ dominates the main term $|\cG|^4/p$ in 
 Corollary~\ref{cor:EnergyShiftSubgr}, so in this case we simply write 
 $$
\sqrt{ \rE^\times (\cG+\lambda)  \rE^\times (\cG+\mu)} 
+ O(|\cG|^2)  \ll \fR
$$
and the bound follows. 

Similar arguments also lead to the same bounds on $|\cS_2 |$. 
For example, consider the case  $|\cG| \ge p^{2/3}$ (where the statement about $\cS_2$ is 
slightly different than that about $\cS_1$).  We denote
\begin{equation} 
\label{eq: Set Q}
\cQ = \cG \cS_2=  \frac{\lambda \cG-\cG}{\mu \cG-\cG} .
\end{equation}
Using the orthogonality of exponential functions, for any $\xi  \in  \F_p^*$, 
we can write
\begin{align*}
\left|\left \{\lambda u_1-u_2 = \xi (\mu v_1-v_2)~:~u_i, v_i \in \cG, \ i =1,2\right\}\right | & \\
=
\frac{|\cG|^4}{p}  + \frac{1}{p}\sum_{a\in \F_p^*} \sum_{u \in \cG} \ep( a \lambda   u) 
  \sum_{v \in \cG} \ep(v) 
\sum_{w\in \cG}  \ep(a&\xi  \mu w) 
\sum_{z\in \cG} \ep(-a\xi z). 
 \end{align*}
As before, 
we obtain
\begin{align*}
\left|\left|\left \{\lambda u_1-u_2 = \xi(v_1-v_2)~:~u_i, v_i \in \cG, \ i =1,2\right\}\right | -  \frac{|\cG|^4}{p} \right |  &\\
< \sum_{a \in \F_p^*} \left| \sum_{w \in \cG} \ep(a \xi w)\right |^2 = p&|\cG| - |\cG|^2.
\end{align*}
Hence, for $|\cG| > p^{2/3}$ we have
\begin{align*}
\left|\left \{\lambda u_1-u_2 = \xi(v_1-v_2)~:~u_i, v_i \in \cG, \ i =1,2\right\}\right | &\\
>  \frac{|\cG|^4}{p} &-  p|\cG| + |\cG|^2 >  |\cG|^2.
\end{align*}
Therefore, there is a solution with $v_1\ne v_2$ which leads to a representation $\xi = (\lambda u_1-u_2)/(v_1-v_2)$
for every  $\xi\in  \F_p^*$.

Proofs of the other statements about    $\cS_2$ are the same as those about $\cS_1$.  
 \end{proof}

In particular, Corollary~\ref{cor:AddRepr} applies to $\cS_1 = \cG+\cG + \cG$ and  
$\cS_1 = \cG + \cG-\cG$.

We note that for the set $\cQ$ given by~\eqref{eq: Set Q} we have
$0\in \frac{\lambda \cG-\cG}{\mu \cG-\cG}$ if and only if $\lambda \in \cG$.

\begin{remark}
	Let 
	$$
	\cQ=\frac{ \lambda  \cG- \cG}{\cG-\cG} \mand 
	\cR=\frac{\lambda \cG-1}{\cG-1}. 
	$$
	Clearly, 
	$$
	\cR\cG = \cQ \mand \cR = 1-\cR. 
	$$ 
	Hence the set $\cQ$ contains both $\cR \cG$ and $(1-\cR)\cG$ and hence
	$|\cQ| \ge \max\{ |\cR \cG|, |(1-\cR)\cG| \}$. Using~\cite[Theorem~18]{Shkr5} 
	 and $|\cR| \gg |\cG|^2/ \log |\cG|$ one can show that there is an absolute constant $c > 0$ such that  $|\cQ| \gg |\cG|^{2+c}$ for sufficiently small $\cG$ (the condition $|\cQ|^2 |\cG| \le p^2$ is enough). 
	Thus the lower bound for size of $\cQ$ which follows from bounds on $|\cS_2|$ in  Corollary~\ref{cor:AddRepr} can be improved for small subgroups. 
\end{remark}

We note that Corollary~\ref{cor:AddRepr}  also allows us to obtain the following version of the {\it Romanoff
theorem\/} modulo almost all primes $p$.  

\begin{cor}
\label{cor:Rom} For a fixed integer $g$ with $|g|\ge 2$, and sufficiently 
large $Q$, for all but $o(Q/\log Q)$ primes $p \le Q$ every residue class 
modulo $p$ can be represented as $\ell + g^k + g^m + g^n$
for a prime $\ell < p$ and positive integers $k,m,n\le p-1$. 
\end{cor}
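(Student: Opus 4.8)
The plan is to reduce the representation problem to a covering statement for the threefold sumset of the cyclic group generated by $g$, and then to combine Corollary~\ref{cor:AddRepr} with the abundance of primes below $p$. First I would fix $g$ with $|g|\ge 2$, discard the finitely many primes $p\mid g$, and set $\cG=\langle g\rangle\subseteq\F_p^*$, $t=|\cG|=\mathrm{ord}_p(g)$. Since the order of $g$ divides $p-1$, the powers $g^k$ with $1\le k\le p-1$ run through all of $\cG$, so $g^k+g^m+g^n$ ranges exactly over $\cS_1=\cG+\cG+\cG$. Hence a class $a\bmod p$ is representable as $\ell+g^k+g^m+g^n$ with $\ell<p$ prime precisely when $a-\ell\in\cS_1$ for some prime $\ell<p$. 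As the $\pi(p)$ primes $\ell<p$ occupy pairwise distinct residues modulo $p$, everything reduces to showing that the $\pi(p)$ shifts $a-\ell$ cannot all avoid $\cS_1$.

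Next I would call $p$ \emph{good} if $t\ge C p^{1/2}\log p$ for a suitable absolute constant $C$, and check representability for good $p$. If $t\ge p^{2/3}$, Corollary~\ref{cor:AddRepr} gives $\F_p^*\subseteq\cS_1$, so for any $a$ it suffices to choose a prime $\ell<p$ with $\ell\not\equiv a$, which exists once $\pi(p)\ge 2$. If instead $Cp^{1/2}\log p\le t<p^{2/3}$, the same corollary yields $p-|\cS_1|\ll p^{3/2}/t<\pi(p)$; since the residues $\{a-\ell\}$ are distinct and number $\pi(p)$, they cannot all lie in the complement of $\cS_1$, so some prime $\ell<p$ satisfies $a-\ell\in\cS_1$. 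Either way every class $a$ is represented, uniformly in $a$, so every good $p$ works.

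Finally I would estimate the exceptional primes, those with $t=\mathrm{ord}_p(g)<Cp^{1/2}\log p$. The primes with $\mathrm{ord}_p(g)=d$ all divide $\Phi_d(g)$, hence number at most $\omega(\Phi_d(g))\ll_g d$, giving $\#\{p\le Q:\mathrm{ord}_p(g)\le D\}\ll_g D^2$. Here lies the main obstacle: this elementary count is $o(\pi(Q))$ only for $D=o\bigl((Q/\log Q)^{1/2}\bigr)$, which falls short of the threshold $p^{1/2}\log p$ demanded by the covering step, and the gap cannot be closed by improving the pigeonhole alone, since a genuine gain would require square-root cancellation in $\sum_{\ell<p}\ep(c\ell)$, i.e.\ a GRH-strength input at modulus $p$. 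The way around is to feed in the sharper distribution of multiplicative orders: by the Erd\H{o}s--Murty bound, $\mathrm{ord}_p(g)>p^{1/2}\log p$ for all but $o(\pi(Q))$ primes $p\le Q$, which is exactly what the covering step needs. With this input the set of non-good primes is $o(Q/\log Q)$, and the corollary follows.
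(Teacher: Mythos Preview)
Your proposal is correct and follows essentially the same approach as the paper: reduce to showing $p-|\cS_1|<\pi(p)$ via Corollary~\ref{cor:AddRepr}, and handle the exceptional primes by a lower bound on $\mathrm{ord}_p(g)$. The only difference is that the paper invokes Indlekofer--Timofeev (giving $\mathrm{ord}_p(g)\ge p^{1/2}\exp((\log p)^\alpha)$ for almost all $p$) rather than Erd\H{o}s--Murty, and indeed the paper itself remarks that Erd\H{o}s--Murty suffices here; your digression through the elementary $\Phi_d(g)$ count is unnecessary but harmless.
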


\begin{proof} We recall that by a special case of a  result of  Indlekofer and Timofeev~\cite[Corollary~6]{IndlTim}, 
given any positive $\alpha < 1$, for all but $o(Q/\log Q)$ primes $p \le Q$, the multiplicative 
order of $g$ modulo $p$ is at least $p^{1/2} \exp\((\log p)^{\alpha}\)$.
For each of these primes,  we apply  Corollary~\ref{cor:AddRepr} to the set $\cS_1 =  \cG+\cG + \cG$  
with the group  $\cG \equiv \langle g \pmod p \rangle$ 
(only the first two inequalities are relevant)  and use that  
for $ |\cG| \ge p^{2/3}$ we have 
$p^{5/2}|\cG|^{-5/2} \le  p^{3/2} |\cG|^{-1}$. 
Hence
we obtain 
$$
p-|\cS_1| \ll p^{3/2} |\cG|^{-1} \ll p \exp\(-(\log p)^{\alpha}\)  = o(p/\log p),
$$
and by the prime number theorem we conclude the proof.
\end{proof}  

We remark that a classical result of  Erd\H{o}s and Murty~\cite{ErdMur} can also be used
in the proof of Corollary~\ref{cor:Rom}, however the bound of~\cite[Corollary~6]{IndlTim} 
used in full strength allows to 
get better estimates on the size of the exceptional set.
Perhaps more recent results of Ford~\cite{Ford}  can also be used to 
estimate the size of the exceptional set, however we do not pursue this here.
\subsection{Possible application to arbitrary sets} 
%

Note that some auxiliary  results established in the proofs of~\cite[Theorems~1 and~2]{Glib} can be 
reformulated as bounds on the size of the set 
$(\cA-\cA)(\cA-\cA)$ for an arbitrary set $\cA \subseteq \F_p$. We also refer to~\cite{Bal} for more recent results and
references.  Combined with the ideas of Balog~\cite{Bal}  this may lead to further 
results on additive properties of the product sets of difference sets.

\section*{Acknowledgements}

The authors would like to thank Giorgis Petridis  for his 
comments and suggestions and  the referee for the very careful reading 
of the manuscript and numerous corrections. 

During the preparation of this work, 
the third author
was supported 
by the Australian Research Council Grant DP170100786.

\end{document}